\newtheorem{theorem}{\sc{Theorem}}[section]
\newtheorem{lemma}[theorem]{\sc{Lemma}}
\newtheorem{proposition}[theorem]{\sc{Proposition}}
\newtheorem{step}{Step}[theorem]
\theoremstyle{remark}
\theoremstyle{definition}
\newtheorem{definition}{Definition}[section]
\newcommand{\tens}{\otimes}
\newcommand{\HE}{(HE)\xspace}
\newcommand{\HEp}{(HE$^+$)\xspace}
\newcommand{\HU}{(HU)\xspace}
\newcommand{\HUp}{(HU$^+$)\xspace}
\newtheorem{remark}{Remark}[section]
\newcommand{\ac}{\mathrm{ac}}
\newcommand{\spt}{\mathrm{spt}}
\DeclareMathOperator{\MA}{MA}
\DeclareMathOperator{\JMA}{JMA}
\DeclareMathOperator{\HMA}{HMA}
\DeclareMathOperator{\DD}{D}
\newcommand{\eps}{\varepsilon}
\renewcommand{\phi}{\varphi}
\renewcommand{\bar}[1]{\overline{#1}}
\renewcommand{\leq}{\leqslant}
\renewcommand{\geq}{\geqslant}
\renewcommand{\div}{\mathrm{div}}
\newcommand{\Prob}{\mathcal{P}}
\newcommand{\B}{\mathrm{B}}
\newcommand{\restr}[2]{\left.#1\right|_{#2}}
\newcommand{\sca}[2]{\langle #1 | #2\rangle}
\newcommand{\Haus}{\mathcal{H}}
\newcommand{\Wass}{\operatorname{W}}
\newcommand{\dd}{\operatorname{d}}
\newcommand{\nr}[1]{\left\Vert #1\right\Vert}
\newcommand{\abs}[1]{\left\vert #1\right\vert}
\renewcommand{\d}{\mathrm{d}}
\newcommand{\LL}{\mathrm{L}}
\newcommand{\Class}{\mathcal{C}}
\DeclareMathOperator{\diam}{diam}
\newcommand{\Rsp}{\mathbb{R}}
\newcommand{\const}{\mathrm{const}}
\DeclareMathOperator{\Lag}{Lag}
\newcommand{\mK}{\mathcal{K}}
\newcommand{\mE}{\mathcal{E}}
\newcommand{\mF}{\mathcal{F}}
\newcommand{\mU}{\mathcal{U}}
\newcommand{\eRsp}{\bar{\Rsp}}
\newcommand{\one}{\mathbf{1}}
\author{J.-D. Benamou}
\address{Inria Rocquencourt} 
\author{G. Carlier}
\address{Ceremade, Université Paris-Dauphine} 
\author{Q. Mérigot}
\address{Laboratoire Jean Kuntzmann, Université Grenoble-Alpes / CNRS}
\author{É. Oudet}
\address{Laboratoire Jean Kuntzmann, Université Grenoble-Alpes}
\title{Discretization of functionals involving the Monge-Ampère operator}
\begin{document}

\begin{abstract}
  Gradient flows in the Wasserstein space have become a powerful tool
  in the analysis of diffusion equations, following the seminal work
  of Jordan, Kinderlehrer and Otto (JKO). The numerical applications
  of this formulation have been limited by the difficulty to compute
  the Wasserstein distance in dimension $\geq 2$. One step of the JKO
  scheme is equivalent to a variational problem on the space of convex
  functions, which involves the Monge-Ampère operator. Convexity
  constraints are notably difficult to handle numerically, but in our
  setting the internal energy plays the role of a barrier for these
  constraints. This enables us to introduce a consistent
  discretization, which inherits convexity properties of the
  continuous variational problem. We show the effectiveness of our
  approach on nonlinear diffusion and crowd-motion models.
\end{abstract}

\maketitle

\section{Introduction}

\subsection{Context}
\subsubsection{Optimal transport and displacement convexity}
In the following, we consider two probability measures $\mu$ and $\nu$
on $\Rsp^d$ with finite second moments, the first of which is absolutely continuous with respect
to the Lebesgue measure. We are interested in the quadratic optimal
transport problem between $\mu$ and $\nu$:
\begin{equation}
\min \left\{ \int_{X} \nr{T(x) - x}^2 \dd\mu(x);\, T:X \to \Rsp^d,~T_{\#}\mu = \nu \right\}
\label{eq:ot}
\end{equation}
where $T_{\#}\mu$ denotes the pushforward of $\mu$ by $T$. A theorem
of Brenier shows that the optimal map in \eqref{eq:ot} is given by the
gradient of a convex function \cite{brenier1991polar}. Define the
Wasserstein distance between $\mu$ and $\nu$ as the square root of the
minimum in \eqref{eq:ot}, and denote it $\Wass_2(\mu,\nu)$. Denoting by $\mK$ the 
space of convex functions, Brenier's
theorem implies that for $\phi\in \mK$, 
\begin{equation}
\Wass^2_2(\mu,\nabla\phi_{\#}\mu) = \int_{\Rsp^d} \nr{x - \nabla\phi(x)}^2 \dd\mu(x)
\end{equation}
and that the map defined 
\begin{equation}
\phi \in \mK \mapsto \nabla\phi_{\#}\mu \in \Prob(\Rsp^d),
\label{eq:brenier}
\end{equation}
is onto. This map can be seen as a parameterization, depending on
$\mu$, of the space of probability measures by the set of convex potentials $\mK$.
 This idea has been exploited by McCann
\cite{mccann1997convexity} to study the steady states of gases whose
energy $\mF: \Prob(\Rsp^d)\to\Rsp$ is the sum of an internal energy
$\mU$, such as the negative entropy, and an interaction energy
$\mE$. McCann gave sufficient conditions for such a functional $\mF$
to be convex along minimizing Wasserstein geodesics. These conditions
actually imply a stronger convexity property for the functional $\mF$:
this functional is convex under generalized displacement: for any
absolutely continuous probability measure $\mu$, the composition of
$\mF$ with the parameterization given in Eq.~\eqref{eq:brenier}, $\phi
\in \mK \mapsto \mF(\nabla\phi_{\# \mu})$, is convex. Generalized
displacement convexity allows one to turn a non-convex optimization
problem over the space of probability measures into a convex
optimization problem on the space of convex functions.

\subsubsection{Gradient flows in Wasserstein space and JKO scheme}%
Our goal is to simulate numerically non-linear evolution PDEs which
can be formulated as gradient flows in the Wasserstein space. The
first formulation of this type has been introduced in the seminal
article of Jordan, Kinderlehrer and Otto
\cite{jordan1998variational}. The authors considered the linear
Fokker-Planck equation
\begin{equation}
\left\{
\begin{aligned}
&\frac{\partial\rho}{\partial t} = \Delta \rho + \div(\rho \nabla V) \\
&\rho(0,.) = \rho_0
\end{aligned},
\right.
\label{eq:fk}
\end{equation}
where $\rho(t,.)$ is a time-varying probability density on $\Rsp^d$
and $V$ is a potential energy. The main result of the article is that
\eqref{eq:fk} can be reinterpreted as the gradient flow in the
Wasserstein space of the energy functional
\begin{equation}
\mF(\rho) = \int_{\Rsp^d} (\log
\rho(x) + V(x)) \rho(x) \dd x.
\end{equation}
Jordan, Kinderlehrer and Otto showed how to construct such a gradient
flow through a time-discretization, using a generalization of the
backward Euler scheme. Given a timestep $\tau$, one defines
recursively a sequence of probability densities $(\rho_k)_{k\geq 0}$ :
\begin{equation}
\rho_{k+1} = \arg\min_{\rho\in \Prob^\ac(\Rsp^d)} \frac{1}{2\tau} \Wass_2^2(\rho_k,\rho) + \mF(\rho).
\label{eq:jko}
\end{equation}
The main theorem of \cite{jordan1998variational} is that the discrete
gradient flow constructed by \eqref{eq:jko} converges to the solution
of the Fokker-Planck equation \eqref{eq:fk} in a suitable weak sense
as $\tau$ tends to zero.  Similar formulations have been proposed for
other non-linear partial differential equations : the porous medium
equation \cite{otto2001geometry} and more general degenerate parabolic
PDEs \cite{agueh2005existence}, the sub-critical Keller-Segel equation
\cite{blanchet2008convergence}, macroscopic models of crowds
\cite{maury2010macroscopic}, to name but a few. The construction and
properties of gradient flows in the Wasserstein space have been
studied systematically in \cite{ambrosio2005gradient}. Finally, even solving for a single step
 of the JKO scheme leads to nontrivial nonlocal PDEs of Monge-Amp\`ere
type which appear for instance in the Cournot-Nash problem in game theory \cite{blanchet2012optimal}.

\subsection{Previous work}
\subsubsection{Numerical resolution of gradient flows.} Despite the
potential applications, there exists very few numerical simulations
that use the Jordan-Kinderlehrer-Otto scheme and its
generalizations. The main reason is that the first term of the
functional that one needs to minimize at each time step, e.g.
Eq. \eqref{eq:jko}, is the Wasserstein distance. Computing the
Wasserstein distance and its gradient is notably difficult in
dimension two or more. In dimension one however, the optimal transport
problem is much simpler because of its relation to monotone
rearrangement. This remark has been used to implement discrete
gradient flows for the quadratic cost
\cite{kinderlehrer1999approximation,blanchet2008convergence,
  blanchet2012optimal} or for more general convex costs
\cite{agueh2013one}. In $2$D, the Lagragian method proposed in
\cite{carrillo2009numerical,burger2010mixed} is inspired by the JKO
formulation but the convexity of the potential is not enforced.

\subsubsection{Calculus of variation under convexity constraints.}
When the functional $\mF$ is convex under generalized displacement,
one can use the parameterization Eq.~\eqref{eq:brenier} to transform
the problem into a convex optimization problem over the space of
convex functions. Optimization problems over the space of convex
functions are also frequent in economy and geometry, and have been
studied extensively, from a numerical viewpoint, when $\mF$ is an
integral functional that involve function values and gradients:
\begin{equation}
\min_{\phi\in\mK} \int_{\Omega} F(x,\phi(x),\nabla\phi(x)) \dd x
\label{eq:cvx}
\end{equation}

The main difficulty to solve this minimization problem numerically is
to construct a suitable discretization of the space of convex
functions over $\Omega$. The first approach that has been considered
is to approximate $\mK$ by piecewise linear functions over a fixed
mesh. This approach has an important advantage: the number of linear
constraints needed to ensure that a piecewise linear function over a
mesh is convex is proportional to the size of the mesh. Unfortunately,
Choné and Le Meur \cite{chone2001non} showed that there exists convex
functions on the unit square that cannot be approximated by
piecewise-linear convex functions on the regular grid with edgelength
$\delta$, even as $\delta$ converges to zero. This difficulty has
generated an important amount of research in the last decade.

Finite difference approaches have been proposed by Carlier,
Lachand-Robert and Maury \cite{carlier2001numerical}, based on the
notion of convex interpolate, and by Ekeland and Moreno-Bromberg using
the representation of a convex function as a maximum of affine
functions \cite{ekeland2010algorithm}, taking inspiration from Oudet
and Lachand-Robert \cite{lachand2005minimizing}.  In both methods, the
number of linear inequality constraints used to discretize the
convexity constraints is quadratic in the number of input points, thus
limiting the applicability of these methods. More recently, Mirebeau
proposed a refinement of these methods in which the set of
active constraints is learned during the optimization process
\cite{mirebeau2013}. Oberman used the idea of imposing convexity
constraints on a wide-stencils \cite{oberman2013numerical}, which
amounts to only selecting the constraints that involve nearby points
in the formulation of \cite{carlier2001numerical}. Oudet and Mérigot
\cite{merigot2014handling} used interpolation operators to approximate
the solutions of \eqref{eq:cvx} on more general finite-dimensional
spaces of functions. All these methods can be used to minimize
functionals that involve the value of the function and its gradient
only. They are not able to handle terms that involve the Monge-Ampère
operator $\det\DD^2\phi$ of the function, which appears when e.g.
considering the negative entropy of $\restr{\nabla\phi}{\#} \rho$. It is worth mentioning here that 
convex variational problems with a convexity constraint and involving the Monge-Ampère
operator $\det\DD^2\phi$ appear naturally in geometric problems such as the affine Plateau problem, 
see Trudinger and Wang \cite{trudinger2005affine} or Abreu's equation, see Zhou \cite{zhou2012first}. 
The Euler-Lagrange equations of such problems are fully nonlinear fourth-order PDEs and looking 
numerically for convex solutions can be done by similar methods as the ones developed in the present paper.  

\subsection{Contributions.}

In this article, we construct a discretization in space of the type
of variational problems that appear in the definition of the JKO
scheme. More precisely, given two bounded convex subsets $X,Y$ of
$\Rsp^d$, and an absolutely continuous measure $\mu$ on $X$, we want
to discretize in space the minimization problem
\begin{equation}
\min_{\nu \in \Prob(Y)} \Wass^2_2(\mu,\nu) + \mE(\nu) + \mU(\nu),
\label{eq:prob-orig}
\end{equation}
where $\Prob(Y)$ denotes the set of probability measures on $Y$, and
where the potential energy $\mE$ and the internal energy $\mU$ are
defined as follows:
\begin{align}
  \mE(\nu) &= \int_{\Rsp^d} \int_{\Rsp^d} W(x,y) \dd[\nu\tens\nu](x,y) + \int_{\Rsp^d} V(x) \dd\nu(x)\\
  \mU(\nu) &= \left\{
\begin{aligned}
&\int_{\Rsp^d} U(\sigma(x))\dd x \hbox{ if } \dd \nu = \sigma \dd \Haus^d, \sigma \in \LL^1(\Rsp^d)\\
& +\infty \hbox{ if  not}
\end{aligned}\right.
\label{eq:U}
\end{align}
We assume McCann's sufficient conditions \cite{mccann1997convexity}
for the generalized displacement convexity of the functional $\mF =
\Wass_2^2(\mu,.) + \mE + \mU$, namely:
\begin{itemize}
\item[\HE] the potential $V: \Rsp^d\to\Rsp$ and interaction potential
  $W:\Rsp^d\times\Rsp^d\to\Rsp$ are convex functions. (If in addition
  $V$ or $W$ is strictly convex, we  denote this assumption \HEp)
\item[\HU] The function $U:\Rsp^+\to\Rsp$ is such that the map
  $r\mapsto r^d U(r^{-d})$ is convex and non-increasing, and $U(0) =
  0$. (If the convexity of $r\mapsto r^d U(r^{-d})$ is strict, we denote 
  this assumption \HUp.)
\end{itemize}
Under assumptions \HE and \HU, the problem \eqref{eq:prob-orig} can be
rewritten as a convex optimization problem. Introducing the space
$\mK_Y$ of convex functions on $\Rsp^d$ whose gradient lie in $Y$
almost everywhere, \eqref{eq:prob-orig} is equivalent to
\begin{equation}
\min_{\phi \in \mK_Y} \Wass^2_2(\mu,\nabla\phi_{\#}\mu) + \mE(\nabla\phi_{\#}\mu) + \mU(\nabla\phi_{\#}\mu).
\label{eq:prob}
\end{equation}
Our contributions are the following:
\begin{itemize}
\item In Section~\ref{sec:discretization}, we discretize the space
  $\mK_Y$ of convex functions with gradients contained in $Y$ by
  associating to every finite subset $P$ of $\Rsp^d$ a
  finite-dimensional convex subset $\mK_Y(P)$ contained in the space
  of real-valued functions on the finite-set $P$. We construct a
  discrete Monge-Ampère operator, in the spirit of Alexandrov, which
  satisfies some structural properties of the operator $\phi \mapsto
  \det(\DD^2\phi)$, such as Minkowski's determinant
  inequality. Moreover, we show how to modify the construction of
  $\mK_Y(P)$ so as to get a linear gradient operator, following an
  idea of Ekeland and Moreno-Bromberg \cite{ekeland2010algorithm}.
\item In Section~\ref{sec:convexity}, we construct a convex
  discretization of the problem \eqref{eq:prob}. In order to do so, we
  need to define an analogous of $\nabla\phi_{\#} \mu$, where $\phi$
  is a function in our discrete space $\mK_Y(P)$ and where $\mu_P$ is
  a measure supported on $P$. It turns out that in order to maintain
  the convexity of the discrete problem, one needs to define two such
  notions: the pushforward $G_{\phi\#}^\ac \mu_P$ which is absolutely
  continous on $Y$ and whose construction involves the discrete
  Monge-Ampère operator, and $G_{\phi\#} \mu_P$ which is supported on
  a finite set and whose construction involves the discrete
  gradient. The discretization of \eqref{eq:prob} is given by
\begin{equation}
\min_{\phi \in \mK_Y(P)} \Wass^2_2(\mu,G_{\phi\#}\mu_P) + \mE(G_{\phi\#}\mu_P) + \mU(G^\ac_{\phi\#}\mu_P).
\label{eq:prob:disc}
\end{equation}
\item In Section~\ref{sec:convergence}, we show that if
  $(\mu_{P_n})_{n\geq 0}$ is a sequence of probability measures on $X$
  that converge to $\mu$ in the Wasserstein sense, minimizers of the
  discretized problem \eqref{eq:prob:disc} with $P=P_n$ converge, in a
  sense to be made precise, to minimizers of the continuous
  problem. In order to prove this result, we need a few additional
  assumptions: the density of $\mu$ should be bounded from above and
  below on the convex domain $X$, and the integrand in the definition
  of the internal energy \eqref{eq:U} should be convex.
\item Finally, in Section~\ref{sec:numerical} we present two numerical
  applications of the space-discretization \eqref{eq:prob:disc}. Our
  first simulation is a meshless Lagrangian simulation of the porous
  medium equation and the fast-diffusion equation using the gradient
  flow formulation of Otto \cite{otto2001geometry}. The second
  simulation concerns the gradient-flow model of crowd motion
  introduced by Maury, Roudneff-Chupin and Santambrogio
  \cite{maury2010macroscopic}.
\end{itemize}

\subsection*{Notation} 
The Lebesgue measure is denoted $\Haus^d$.  The space of probability
measures on a domain $X$ of $\Rsp^d$ is denoted $\Prob(X)$, while
$\Prob^\ac(X)$ denotes the space of probability measures that are
absolutely continuous with respect to the Lebesgue measure.

\section{Discretization of the space of convex functions}
\label{sec:discretization}

The first goal of this section is to discretize the
space of convex functions whose gradients lie in a prescribed convex
set $Y$. Then, we will define a notion of discrete Monge-Ampère operator for
functions in this space. We will consider functions from $\Rsp^d$ to
the set of extended reals $\eRsp := \Rsp\cup \{+\infty\}$.

\begin{definition}[Legendre-Fenchel transform] The Legendre transform
  $\psi^*$ of a function $\psi: Y \to\eRsp$, is the function
  $\psi^*:\Rsp^d \to\eRsp$ defined by the formula
  \begin{equation} \psi^*(x) := \sup_{y\in Y} \sca{x}{y} -
    \psi(y).\end{equation} The space of Legendre-Fenchel transforms of
  functions defined over a convex set $Y$ is denoted by $\mK_Y := \{
  \psi^*; \psi:Y\to\eRsp\}.$ A function on $\Rsp^d$ is called
  \emph{trivial} if it is constant and equal to $+\infty$. The space
  of non-trivial functions in $\mK_Y$ is denoted $\mK_Y^0$.
\label{def:legendre}
\end{definition}

\begin{lemma} Assume that $Y$ is a bounded convex subset of
  $\Rsp^d$. Then,
\begin{enumerate}[(i)]
\item functions in $\mK_Y$ are trivial or finite everywhere: $\mK_Y =
  \mK_Y^0 \cup \{+\infty\}$;
\item a convex function $\phi$ belongs to $\Class^1 \cap \mK_Y^0$ if
  and only if $\nabla\phi(\Rsp^d) \subseteq Y$ ;
\item the set $\Class^1 \cap \mK_Y^0$ is dense in the set
  $\mK_Y^0$ for $\nr{.}_\infty$;
\item the space $\mK_Y$ is convex;
\item (stability by maximum) given a family of functions
  $(\phi_i)_{i\in I}$ in $\mK_Y$, the function $\phi(x) := \sup_{i\in
    I} \phi_i(x)$ is also in $\mK_Y$.
\end{enumerate}
\end{lemma}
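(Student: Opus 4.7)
The plan is to handle the five assertions in an order that emphasizes the formal structure of the Legendre transform: first the three ``algebraic'' properties (v), (i) and (iv), then the convex-analytic core (ii), and finally the density (iii) as a corollary of (ii).

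For (v), I would simply commute the two suprema in the definition: if $\phi_i=\psi_i^*$, then $\sup_i \phi_i(x) = \sup_{y\in Y}(\sca{x}{y}-\inf_i\psi_i(y))$, so $\sup_i\phi_i = (\inf_i\psi_i)^* \in \mK_Y$. For (i), let $R:=\sup_{y\in Y}\nr{y}<\infty$; if $\psi$ is bounded below on $Y$, the sup defining $\psi^*(x)$ is bounded above by $R\nr{x}-\inf\psi$ and hence finite, whereas if a sequence $y_n\in Y$ realizes $\psi(y_n)\to-\infty$ then $\psi^*\equiv+\infty$. For (iv), writing $t\phi_1(x)+(1-t)\phi_2(x)$ as a joint supremum over $(y_1,y_2)\in Y\times Y$ of $\sca{x}{ty_1+(1-t)y_2} - t\psi_1(y_1)-(1-t)\psi_2(y_2)$ and then reparameterizing by $y:=ty_1+(1-t)y_2\in Y$ (using convexity of $Y$) exhibits the combination as $\tilde\psi^*$ on $Y$, where $\tilde\psi(y):=\inf\{t\psi_1(y_1)+(1-t)\psi_2(y_2):ty_1+(1-t)y_2=y\}$.

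The main difficulty is (ii). The crucial intermediate step is the subdifferential inclusion $\partial\phi(x)\subseteq\bar Y$ for every $\phi=\psi^*\in\mK_Y^0$. I would argue by contradiction: if $v\in\partial\phi(x)$ lay outside $\bar Y$, Hahn--Banach would give a direction $w$ with $\sca{v}{w}>\sup_{y\in Y}\sca{w}{y}$, contradicting the direct upper bound $\phi(x+tw)-\phi(x)\leq t\sup_{y\in Y}\sca{w}{y}$ read off from $\phi=\psi^*$ combined with the subgradient inequality $\phi(x+tw)\geq\phi(x)+t\sca{v}{w}$. When $\phi\in\Class^1$, the subdifferential is $\{\nabla\phi(x)\}$, yielding the forward implication. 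For the converse, if $\phi$ is $\Class^1$ convex with $\nabla\phi(\Rsp^d)\subseteq Y$, an analogous separating-hyperplane argument applied to $\phi^*$ shows $\mathrm{dom}(\phi^*)\subseteq\bar Y$, and the biconjugation identity $\phi=\phi^{**}$ then reads $\phi(x)=\sup_{y\in\bar Y}(\sca{x}{y}-\phi^*(y))=(\restr{\phi^*}{Y})^*(x)$, placing $\phi$ in $\mK_Y^0$.

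Finally, (iii) is a quantitative mollification. Any $\phi\in\mK_Y^0$ is globally $R$-Lipschitz since each affine term in the supremum has slope in $Y$; so if $\rho_\eps$ is a standard mollifier supported in $\bar{B}(0,\eps)$, then $\phi_\eps:=\phi*\rho_\eps$ is $C^\infty$, convex, and satisfies $\nr{\phi_\eps-\phi}_\infty\leq R\eps$ by a one-line estimate. The gradient $\nabla\phi_\eps=(\nabla\phi)*\rho_\eps$ is a $\rho_\eps$-weighted average of a.e.~defined gradients of $\phi$, each of which lies in $\bar Y$ by the subdifferential inclusion from (ii); convexity of $Y$ then gives $\nabla\phi_\eps(\Rsp^d)\subseteq\bar Y$, and the converse direction of (ii) concludes $\phi_\eps\in\Class^1\cap\mK_Y^0$. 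Throughout I would implicitly replace $Y$ by $\bar Y$ where needed, a harmless adjustment since $Y$ is only assumed bounded convex.
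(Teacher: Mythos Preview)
Your proof is correct. The paper writes out only (i) and (iii), leaving (ii), (iv), (v) implicit; your arguments for those three fill the gaps cleanly, and your treatment of (i) coincides with the paper's. The genuine divergence is in (iii). The paper works on the dual side: starting from $\phi=\psi^*$ with $\psi$ convex and bounded below on $Y$, it sets $\psi_\eps:=\psi+\eps\nr{\cdot}^2$, so that uniform convexity of $\psi_\eps$ forces the maximizer in the conjugate to be unique and hence $\phi_\eps:=\psi_\eps^*\in\mK_Y$ is $\Class^1$, with the uniform bound $\nr{\phi_\eps-\phi}_\infty\leq\eps\sup_{y\in Y}\nr{y}^2$ read off directly. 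You instead mollify $\phi$ in the primal domain and then invoke (ii) to re-enter $\mK_Y$. The paper's route is shorter and sidesteps the $Y$ versus $\bar Y$ bookkeeping, since $\psi_\eps$ lives on $Y$ by construction; your route is more self-contained, produces $\Class^\infty$ rather than merely $\Class^1$ approximants, and makes the logical dependence on (ii) explicit.
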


\begin{proof}
  (i) We assume that $\phi$ belongs to $\mK_Y$, i.e. $\phi = \psi^*$, where
  $\psi$ is a function from $Y$ to $\bar{\Rsp}$. We will first show
  that if $\phi$ is non-trivial, then $\psi$ is lower bounded by a
  constant on $Y$. By contradiction, assume that there exists a set of
  points $y_k$ in $Y$ such that $\psi(y_k) \to -\infty$. In this case,
  given any point $x$ in $X$ we have
\begin{equation*} \psi^*(x) \geq \max_k \sca{x}{y_k} - \psi(y_k) \geq 
\max_k - \nr{x}\nr{y_k} - \psi(y_k) = +\infty,
\end{equation*}
so that $\phi$ is trivial.

(iii) Assume that $\phi$ belongs to $\mK_Y^0$, so that there exists a
convex function $\psi: Y\to\Rsp$ lower bounded by a constant and such that
$\psi^* = \phi$. Then, we can approximate $\psi$ by uniformly convex
functions $\psi_\eps(y) := \psi(y) + \eps\nr{y}^2$ on $Y$. The
functions $\phi_\eps := \psi_\eps^*$ belong to $\mK_Y$, are smooth,
and uniformly converge to the function $\phi$.
\end{proof}

\begin{definition}[$\mK_Y$-envelope and $\mK_Y$-interpolate]
  The \emph{$\mK_Y$--envelope} of a function $\phi$ defined on a
  subset $P$ of $\Rsp^d$ is the largest function in $\mK_Y$ whose
  restriction to $P$ lies below $\phi$. In other words,
\begin{equation}
  \phi_{\mK_Y} := \max \{ \psi \in \mK_Y; \restr{\psi}{P} \leq \restr{\phi}{P}\}.
\end{equation}
A function $\phi$ on a set $P\subseteq \Rsp^d$ is a
\emph{$\mK_Y$-interpolate} if it coincides with the restriction to $P$
of its $\mK_Y$--envelope. The space of $\mK_Y$--interpolates is denoted
\begin{equation}
  \mK_Y(P) := \{ \phi:P \to \Rsp; \phi = \restr{\phi_{\mK_Y}}{P} \}.
\end{equation}
\end{definition}

\subsection{Subdifferential and Laguerre cells}
Consider a convex function $\phi$ on $\Rsp^d$, and a point $x$. A
vector $y\in\Rsp^d$ is a \emph{subgradient} of $\phi$ at $x$ if for
every $z$ in $\Rsp^d$, the inequality $\phi(z) \geq \phi(x) +
\sca{z-x}{y}$ holds. The \emph{subdifferential} of $\phi$ at $x$ is
the set of subgradients to $\phi$ at $x$, i.e.
\begin{equation}
\partial \phi(x) := \{ y \in \Rsp^d; \forall z\in \Rsp^d,\phi(z) \geq \phi(x) + \sca{z-x}{y} \}
\end{equation}
The following lemma allows one to compute the subdifferential of the
$\mK_Y$--envelope of a function in $\mK_Y(P)$.

\begin{definition}[Laguerre cell] Given a finite point set $P$
  contained in $\Rsp^d$, a function $\phi$ on $P$, we denote the
  \emph{Laguerre cell} of a point $p$ in $P$ the polyhedron
$$\Lag_P^\phi(p) := \{ y \in \Rsp^d; \forall q\in P,\phi(q) \geq \phi(p) + \sca{q-p}{y} \}.$$
Note that the union of the Laguerre cells covers the space, while the
intersection of the interior of two Laguerre cells is always empty.
\end{definition}

\begin{lemma} Let $P$ be a finite point set. A function $\phi$ on $P$
  belongs to $\mK_Y(P)$ if and only if for every $p$ in $P$, the
  intersection $\Lag_P^\phi(p) \cap Y$ is non-empty. Moreover, if this
  is the case, then 
\begin{equation}
\partial \phi_{\mK_Y}(p) = \Lag_P^\phi(p) \cap Y.
\label{eq:convex}
\end{equation}
\end{lemma}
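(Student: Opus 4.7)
The plan is to compute the $\mK_Y$-envelope $\phi_{\mK_Y}$ explicitly and then read off both claims from the formula. Since every element of $\mK_Y$ is a Legendre-Fenchel transform, hence a supremum of affine functions with slopes in $Y$, and since $\mK_Y$ is stable by taking maxima (part (v) of the preceding lemma), the largest member of $\mK_Y$ lying below $\phi$ on $P$ is
\[
  \phi_{\mK_Y}(x) \;=\; \sup\bigl\{\sca{x}{y} - c \tq y \in Y,\ c \in \Rsp,\ \sca{q}{y} - c \leq \phi(q)\ \forall q \in P\bigr\}.
\]
Optimizing first in $c$ gives $c = \psi(y) := \max_{q\in P}\bigl(\sca{q}{y} - \phi(q)\bigr)$, so $\phi_{\mK_Y} = \psi^{*}$ where $\psi:Y\to\Rsp$ is the finite continuous piecewise-affine function above.

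Specializing to $x=p\in P$ yields
\[
  \phi_{\mK_Y}(p) \;=\; \sup_{y\in Y}\min_{q\in P}\bigl[\phi(q) - \sca{q-p}{y}\bigr].
\]
Taking $q=p$ in the inner minimum forces $\phi_{\mK_Y}(p) \leq \phi(p)$ unconditionally, and equality holds iff there exists $y\in Y$ with $\phi(q)-\phi(p)\geq \sca{q-p}{y}$ for every $q\in P$, which is precisely the condition $y\in \Lag_P^\phi(p)\cap Y$. Since $\phi\in\mK_Y(P)$ is defined by $\phi_{\mK_Y}(p)=\phi(p)$ for all $p\in P$, the first claim of the lemma follows at once.

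For the subdifferential formula, assume $\phi\in\mK_Y(P)$ and fix $p\in P$. If $y\in\Lag_P^\phi(p)\cap Y$, the affine function $\ell(x)=\phi(p)+\sca{x-p}{y}$ has slope in $Y$ and thus belongs to $\mK_Y$; by the Laguerre cell condition $\ell|_P\leq \phi|_P$, so $\ell\leq \phi_{\mK_Y}$ everywhere, and $\ell(p)=\phi(p)=\phi_{\mK_Y}(p)$ exhibits $\ell$ as a supporting affine minorant at $p$, giving $y\in\partial\phi_{\mK_Y}(p)$. Conversely, take $y\in\partial\phi_{\mK_Y}(p)$. By part (i) of the preceding lemma, $\phi_{\mK_Y}$ is finite on all of $\Rsp^d$, so the subgradient inequality applied at $z=q\in P$, combined with $\phi_{\mK_Y}|_P = \phi|_P$, gives $\phi(q)\geq \phi(p)+\sca{q-p}{y}$, i.e., $y\in\Lag_P^\phi(p)$. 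It remains to show $y\in Y$: extending $\psi$ by $+\infty$ outside the (closed bounded) set $Y$ gives a proper convex lower semi-continuous function, for which the standard Legendre-Fenchel duality $y\in\partial\psi^{*}(p) \iff p\in\partial\psi(y)$ forces $y\in\dom\psi = Y$.

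The main obstacle is this last inclusion $\partial\phi_{\mK_Y}(p)\subseteq Y$: it is the only step that uses compactness of $Y$ in an essential way, via the attainment of the Legendre-Fenchel supremum. All other parts of the proof are direct consequences of writing down the envelope explicitly and unwinding the definitions of $\mK_Y(P)$ and of the Laguerre cell.
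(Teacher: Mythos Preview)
Your proof is correct, and it takes a genuinely different route from the paper's. The paper compares $\phi_{\mK_Y}$ to the \emph{unconstrained} convex envelope $\phi_{\mK}$: from $\phi_{\mK_Y}\leq\phi_{\mK}$ with equality on $P$ it deduces $\partial\phi_{\mK_Y}(p)\subseteq \partial\phi_{\mK}(p)=\Lag_P^\phi(p)$, and it gets the inclusion $\partial\phi_{\mK_Y}(p)\subseteq Y$ (and the reverse inclusion) from the covering fact $Y\subseteq\bigcup_{q\in P}\partial\phi_{\mK_Y}(q)$. You instead compute the envelope explicitly as $\psi^*$ with $\psi(y)=\max_{q\in P}(\sca{q}{y}-\phi(q))$, read off the ``iff'' characterisation directly from $\phi_{\mK_Y}(p)=\sup_{y\in Y}\min_q[\phi(q)-\sca{q-p}{y}]$, and obtain $\partial\phi_{\mK_Y}(p)\subseteq Y$ via the Fenchel inversion $\partial\psi^*=(\partial\psi)^{-1}$ after extending $\psi$ by $+\infty$. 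Your approach is more self-contained and makes the role of closedness of $Y$ explicit (you need it for lower semicontinuity of the extended $\psi$, and for attainment of the supremum in the equivalence $\phi_{\mK_Y}(p)=\phi(p)\Leftrightarrow \Lag_P^\phi(p)\cap Y\neq\emptyset$); the paper's argument is shorter but leaves the first assertion of the lemma implicit and relies on a geometric covering property that is stated without proof. Both arguments ultimately need $Y$ closed for the subdifferential identity to hold as an equality of sets.
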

\begin{proof} Denote $\mK := \mK_{\Rsp^d}$ and $\phi_{\mK}$ the convex
  envelope of $\phi$. It is then easy to see that for every point $p$
  in $P$ such that $\phi_{\mK}(p) = \phi(p)$,
$$\partial \phi_{\mK}(p) = \Lag_P^\phi(p).$$
Since $\mK_Y \subseteq \mK$ and by definition, one has
$\phi_{\mK_Y}(x) \leq \phi_{\mK}(x)$, with equality when $x$ is a
point in $P$. This implies the inclusion $\partial \phi_{\mK_Y}(p)
\subseteq Y \cap \partial \phi_{\mK}(p).$ In order to show that the
converse also holds, one only needs to remark that
\begin{equation*} Y \subseteq \bigcup_{p \in P} \partial \phi_{\mK_Y}(p).\qedhere
\end{equation*}
\end{proof}

\begin{lemma}
Let $\phi_0, \phi_1$ in $\mK_Y(P)$, let $\phi_t =
  (1-t)\phi_0 + t\phi_1$ be the linear interpolation on $P$ between
  these functions, and denote $\hat{\phi}_t := [\phi_t]_{\mK_Y}$. Then
  for any $p$ in $P$, 
\label{lem:convex}
\begin{align}
\partial \hat{\phi}_t (p) &\supseteq (1-t) \partial
  \hat{\phi}_0(p) + t \partial
  \hat{\phi}_1(p)\\
\Lag_P^{\phi_t}(p)\cap Y &\supseteq 
(1-t) (\Lag_P^{\phi_0}(p)\cap Y) + 
t (\Lag_P^{\phi_1}(p)\cap Y)
\end{align}
\end{lemma}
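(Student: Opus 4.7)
The plan is to exploit the convexity of $\mK_Y$ together with the maximality property in the definition of the $\mK_Y$-envelope. Define the auxiliary function
\[
\Phi_t := (1-t)\hat{\phi}_0 + t\hat{\phi}_1
\]
on $\Rsp^d$. By item (iv) of the first lemma of Section~\ref{sec:discretization}, $\Phi_t \in \mK_Y$. Moreover, since $\phi_0,\phi_1 \in \mK_Y(P)$ we have $\hat{\phi}_i|_P = \phi_i$, so $\Phi_t|_P = \phi_t$.

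Next I would compare $\Phi_t$ with $\hat{\phi}_t = [\phi_t]_{\mK_Y}$. By definition of the envelope, $\hat{\phi}_t|_P \leq \phi_t$. On the other hand, $\Phi_t$ is a competitor in the max defining $\hat{\phi}_t$, so $\Phi_t \leq \hat{\phi}_t$ pointwise; restricting to $P$, $\phi_t = \Phi_t|_P \leq \hat{\phi}_t|_P \leq \phi_t$, hence equality holds on $P$. In particular, $\phi_t \in \mK_Y(P)$ and $\hat{\phi}_t(p) = \Phi_t(p)$ for every $p \in P$.

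For the first inclusion, pick $y_0 \in \partial \hat{\phi}_0(p)$ and $y_1 \in \partial \hat{\phi}_1(p)$. Writing the subgradient inequalities
\[
\hat{\phi}_i(z) \geq \hat{\phi}_i(p) + \sca{z-p}{y_i}, \quad i=0,1,
\]
and taking the convex combination with weights $1-t,t$ yields
\[
\Phi_t(z) \geq \Phi_t(p) + \sca{z-p}{(1-t)y_0 + t y_1}.
\]
Combining with $\hat{\phi}_t \geq \Phi_t$ on $\Rsp^d$ and $\hat{\phi}_t(p) = \Phi_t(p)$ gives
\[
\hat{\phi}_t(z) \geq \hat{\phi}_t(p) + \sca{z-p}{(1-t)y_0 + t y_1},
\]
so that $(1-t)y_0 + ty_1 \in \partial \hat{\phi}_t(p)$, which is the first inclusion.

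The second inclusion is then immediate from the preceding lemma identifying subdifferential and Laguerre cell: since $\phi_0,\phi_1,\phi_t$ all lie in $\mK_Y(P)$, equation \eqref{eq:convex} gives $\partial \hat{\phi}_i(p) = \Lag_P^{\phi_i}(p) \cap Y$ for $i=0,1,t$, and substituting into the first inclusion yields exactly the second. There is no real obstacle here; the only subtlety to keep track of is that $\Phi_t$ and $\hat{\phi}_t$ are generally different functions off of $P$, so the argument must use $\Phi_t \leq \hat{\phi}_t$ together with equality at $p$ rather than attempting to identify them globally.
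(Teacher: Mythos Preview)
Your proof is correct. The route differs slightly from the paper's, and it is worth contrasting the two.

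The paper proves the Laguerre-cell inclusion directly: if $y_i\in\Lag_P^{\phi_i}(p)\cap Y$, the finitely many defining inequalities $\phi_i(q)\geq\phi_i(p)+\sca{q-p}{y_i}$ (for $q\in P$) combine linearly to give $y_t\in\Lag_P^{\phi_t}(p)$, and convexity of $Y$ gives $y_t\in Y$. The subdifferential inclusion then follows from the identification $\partial\hat{\phi}_i(p)=\Lag_P^{\phi_i}(p)\cap Y$. This argument never leaves the finite set $P$ and never needs to construct or compare the envelopes $\hat{\phi}_i$ on $\Rsp^d$.

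You instead work on all of $\Rsp^d$: you build the auxiliary function $\Phi_t=(1-t)\hat{\phi}_0+t\hat{\phi}_1\in\mK_Y$, compare it to $\hat{\phi}_t$ via the maximality of the envelope, and use $\Phi_t\leq\hat{\phi}_t$ with equality at $p$ to transfer the subgradient inequality. This is a bit more machinery but has the advantage of establishing $\phi_t\in\mK_Y(P)$ up front, so the appeal to \eqref{eq:convex} for $\phi_t$ is fully justified; the paper's opening sentence (``the two inclusions are equivalent'') tacitly assumes this and only secures it a posteriori once the Laguerre-cell inclusion yields nonemptiness. Both arguments rest on the same convex-combination idea; the paper's version is shorter, yours is logically cleaner.
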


\begin{proof}
  Thanks to the previous lemma, the two inclusions are
  equivalent. Now, let $y_i$ be a point in
$\Lag_P^{\phi_i}(p)\cap Y$, so that
$$ \forall q\in P,~ \phi_i(q) \geq \phi_i(p) + \sca{q-p}{y_i}.$$
Taking a linear combination of these inequalities, we get
$$ \forall q\in P,~ (1-t) \phi_0(q) + t\phi_1(q) \geq (1-t) \phi_0(p) + t\phi_1(p) + \sca{q-p}{y_t},$$
with $y_t = (1-t) y_0 + ty_1$. In other words, the point $y_t$ belongs
to the Laguerre cell $\Lag_P^{\phi_t}(p)$. Since this holds for any
pair of points $y_0$ in $\Lag_P^{\phi_0}(p)$ and $y_1$ in
$\Lag_P^{\phi_1}(p)$, we get the desired inclusion.
\end{proof}

\begin{remark} A corollary of the two previous lemmas is the convexity
  of the space $\mK_Y(P)$ of $\mK_Y$-interpolates, a fact that does
  not obviously follow from the definition. 
\end{remark}
\begin{remark} The convex envelope of a function defined on a finite
  set is always piecewise-linear. In contrast, when the domain $Y$ is
  bounded, the $\mK_Y$-envelope of an element $\phi$ of the polyhedron
  $\mK_Y(P)$ does not need to be piecewise linear, even when
  restricted to the convex hull of $P$. Fortunately, for the
  applications that we are targeting, we will never need to compute
  this envelope explicitely, and we will only use formula
  \eqref{eq:convex} giving the explicit expression of the
  subdifferential.
\end{remark}

\subsection{Monge-Ampère operator} In this paragraph, we introduce a
notion of discrete Monge-Ampère operator of $\mK_Y$-interpolates on a
finite set.  This definition is closely related to the notion of
Monge-Ampère measure introduced by Alexandrov. Given a smooth
uniformly convex function $\phi$ on $\Rsp^d$, a change of variable
gives
\begin{equation}
\int_B \det(\DD^2\phi(x))\dd x = \int_{\nabla\phi(B)} 1\dd x = \Haus^d(\nabla\phi(B)).
\end{equation}
This equation allows one to define a measure on the source domain $X
\subseteq \Rsp^d$, called the Monge-Ampère measure and denoted
$\MA[\phi]$. Using the right-hand side of the equality, it is possible
to extend the notion of Monge-Ampère measure to convex functions that
are not necessarily smooth (see e.g.  \cite{gutierrez2001monge}):
\begin{equation}
  \MA[\phi](B) :=
  \Haus^d(\partial\phi(B)).
\end{equation}

\begin{definition}
  The discrete Monge-Ampère operator of a $\mK_Y$-interpolate
  $\phi:P\to\Rsp$ at a point $p$ in $P$ is defined by the formula:
\begin{equation}
  \MA_Y[\phi](p) := \Haus^d(\partial \phi_{\mK_{Y}}(p)),
\end{equation}
where $\Haus^d$ denotes the $d$-dimensional Lebesgue measure.
\end{definition}

The relation between the discrete Monge-Ampère operator and the
Monge-Ampère measure is given by the formula:
\begin{equation}
\forall \phi\in\mK_Y(P),~ \MA[\phi_{\mK_Y}] = \sum_{p\in P} \MA_Y[\phi](p) \delta_p.
\end{equation}
In other words, the Monge-Ampère operator can be seen as the density
of the Monge-Ampère measure of $\phi_{\mK_Y}$ with respect to the
counting measure on $P$. The next lemma is crucial to the proof of
convexity of our discretized energies. It is also interesting in
itself, as it shows that the interior of the set $\mK_Y(P)$ of convex
interpolates can be defined by $\abs{P}$ explicit non-linear convex
constraints.


\begin{lemma} For any point $p$ in $P$, the following map is convex:
\begin{equation}
\phi \in \mK_Y(P) \mapsto -\log(\MA_{Y}[\phi](p)).
\end{equation}
\label{lemma:logconcavity}
\end{lemma}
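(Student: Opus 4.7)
The plan is to combine the inclusion established in Lemma~\ref{lem:convex} with the Brunn--Minkowski inequality in its logarithmic form.

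First I would rewrite the quantity in terms of Laguerre cells. By the lemma preceding the definition of $\MA_Y$, for any $\phi\in\mK_Y(P)$ one has $\partial \phi_{\mK_Y}(p) = \Lag_P^\phi(p)\cap Y$, so
\begin{equation*}
\MA_Y[\phi](p) = \Haus^d\bigl(\Lag_P^\phi(p)\cap Y\bigr).
\end{equation*}
Since $Y$ is convex and $\Lag_P^\phi(p)$ is a (convex) polyhedron, the set $A_\phi := \Lag_P^\phi(p) \cap Y$ is convex. Given $\phi_0,\phi_1\in \mK_Y(P)$ and $t\in[0,1]$, set $\phi_t = (1-t)\phi_0 + t\phi_1$; Lemma~\ref{lem:convex} gives the inclusion
\begin{equation*}
A_{\phi_t} \supseteq (1-t) A_{\phi_0} + t A_{\phi_1},
\end{equation*}
where the right-hand side is a Minkowski combination of two convex subsets of $\Rsp^d$.

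Next I would apply the multiplicative (log-concave) form of the Brunn--Minkowski inequality: for any convex bodies $A, B \subseteq \Rsp^d$ with positive Lebesgue measure,
\begin{equation*}
\Haus^d\bigl((1-t)A + tB\bigr) \geq \Haus^d(A)^{1-t}\,\Haus^d(B)^{t}.
\end{equation*}
Applied to $A = A_{\phi_0}$ and $B = A_{\phi_1}$, and combined with the monotonicity of $\Haus^d$ under the above inclusion, this yields
\begin{equation*}
\MA_Y[\phi_t](p) \geq \MA_Y[\phi_0](p)^{1-t}\, \MA_Y[\phi_1](p)^{t}.
\end{equation*}
Taking logarithms and then negating gives exactly
\begin{equation*}
-\log \MA_Y[\phi_t](p) \leq (1-t)\bigl(-\log \MA_Y[\phi_0](p)\bigr) + t \bigl(-\log \MA_Y[\phi_1](p)\bigr),
\end{equation*}
which is the desired convexity.

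The only minor obstacle is handling the degenerate case where one of the $\MA_Y[\phi_i](p)$ vanishes: then $A_{\phi_i}$ has empty interior and the Brunn--Minkowski inequality is trivial since the right-hand side is $0$, while the inequality for $-\log$ is understood with value $+\infty$, so convexity is preserved. Otherwise the argument is entirely structural: the whole proof reduces to recognising that $\MA_Y[\phi](p)$ is the volume of a convex set that depends on $\phi$ through a Minkowski-linear inclusion, so that log-concavity of $\phi \mapsto \MA_Y[\phi](p)$ is precisely the content of Brunn--Minkowski.
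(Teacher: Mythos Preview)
Your proof is correct and follows essentially the same route as the paper: invoke the Minkowski inclusion from Lemma~\ref{lem:convex} for the subdifferentials (equivalently, the Laguerre cells intersected with $Y$), then apply the logarithmic form of the Brunn--Minkowski inequality, handling the degenerate case via the convention $\log(0)=-\infty$. The only cosmetic difference is that the paper works directly with $\partial\hat{\phi}_t(p)$ rather than rewriting everything in terms of $\Lag_P^{\phi_t}(p)\cap Y$.
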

\begin{proof} 
Let $\phi_0, \phi_1$ in $\mK_Y(P)$, let $\phi_t =
  (1-t)\phi_0 + t\phi_1$ be the linear interpolation between these
  functions, and denote $\hat{\phi}_t := [\phi_t]_{\mK_Y}$. 
%
  Using Lemma~\ref{lem:convex}, and with the convention $\log(0) =
  -\infty$, we have
\begin{align*}
\log(\Haus^d(\partial \hat{\phi_t}(p)))
&\geq \log(\Haus^d((1-t) \partial \hat{\phi}_0(p) + t \partial   \hat{\phi}_1(p))) \\
&\geq (1-t) \log(\Haus^d(\partial \hat{\phi}_0(p))) + t \log(\Haus^d(\partial   \hat{\phi}_1(p))), 
\end{align*}
where the second inequality is the logarithmic version of the
Brunn-Minkowski inequality. 
\end{proof}

\subsection{Convex interpolate with gradient}%
In applications, we want to minimize energy functionals over the space
$\mK_Y$, which involve potential energy terms such as \begin{equation}
\phi \mapsto
\int_{X} V(\nabla \phi(x))\dd\mu(x),
\end{equation}
where $V$ is a convex potential on $\Rsp^d$. Any functional defined
this way is convex in $\phi$, and one would like to be able to define
a discretization of this functionals that preserves this
property. Given a function $\phi$ in the space $\mK_Y(P)$ and a point
$p$ in $P$, one wants to select a vector in the subdifferential
$\partial \phi_{\mK_Y}(p)$, and this vector needs to depend linearly
on $\phi$. A way to achieve this is to increase the dimension of the
space of variables, and to include the chosen subgradients as unknown
of the problem. This can be done as in Ekeland and Moreno-Bromberg
\cite{ekeland2010algorithm}.

\begin{definition}[Convex interpolate with gradient]
  A \emph{$\mK_Y$-interpolate with gradient} on a finite subset $P$ of
  $\Rsp^d$ is a couple $(\phi, G_\phi)$ consisting of a function
  $\phi$ in the space of $\mK_Y$-interpolates $\mK_Y(P)$ and a
  gradient map $G_\phi: P\to\Rsp^d$ such that 
\begin{equation}
\forall p\in P,~ G_\phi(p) \in \partial \phi_{\mK_Y}(p).
\end{equation} The space of convex interpolates with gradients is
denoted $\mK_Y^G(P)$.
\end{definition}

Note that the space $\mK_Y^G(P)$ can be considered as a subset of the
vector space of function from $P$ to $\Rsp\times
\Rsp^d$. Lemma~\ref{lemma:dist} below implies that $\mK_Y^G(P)$ forms
a convex subset of this vector space, for which one can construct
explicit convex barriers.
Given a closed subset $A$ of $\Rsp^d$ and $x$ a point of $\Rsp^d$,
$\dd(x, A)$ denotes the minimum distance between $x$ and any point in
$A$.

\begin{lemma} \label{lemma:dist} Let $\phi_0$ and $\phi_1$ be two
  functions in $\mK_Y(P)$ and let $v_i$ a vector in the
  subdifferential $\partial \hat{\phi}_i(p)$ for a certain point $p$
  in $P$. Then,
\begin{enumerate}[(i)]
\item the vector $v_t = (1-t) v_0 + t v_1$ lies in $\partial \hat{\phi}_t(p)$; 
\item the map $t \mapsto \d(v_t, \Rsp^d\setminus \partial \hat{\phi}_t(p))$ is concave; 
\end{enumerate}
Moreover, a function $\phi$ belongs to the interior of $\mK_Y(P)$
if and only if
\begin{equation}
  \forall p\in P,~ \MA_{Y}[\phi](p) > 0.
\end{equation}
\end{lemma}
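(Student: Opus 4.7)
Part (i) follows immediately from Lemma~\ref{lem:convex}: since $v_0 \in \partial\hat{\phi}_0(p)$ and $v_1 \in \partial\hat{\phi}_1(p)$, the point $v_t$ belongs by definition to the Minkowski sum $(1-t)\partial\hat{\phi}_0(p) + t\,\partial\hat{\phi}_1(p)$, which is included in $\partial\hat{\phi}_t(p)$ by that lemma.

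For part (ii), the plan is based on the elementary identity $\d(v, \Rsp^d \setminus A) = \sup\{r \geq 0 : B(v,r) \subseteq A\}$, valid for any closed convex $A$ (here $A = \partial\hat{\phi}_i(p) = \Lag_P^{\phi_i}(p) \cap Y$ is closed convex as an intersection of closed convex sets). Using the Minkowski identity for Euclidean balls $(1-t)B(v_0, r_0) + tB(v_1, r_1) = B(v_t, (1-t)r_0 + tr_1)$, together with Lemma~\ref{lem:convex}, I obtain
\[
B(v_t, (1-t)r_0 + tr_1) \subseteq (1-t)\partial\hat{\phi}_0(p) + t\,\partial\hat{\phi}_1(p) \subseteq \partial\hat{\phi}_t(p)
\]
whenever $B(v_i, r_i) \subseteq \partial\hat{\phi}_i(p)$. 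Passing to the supremum over admissible radii $r_0, r_1$ yields the claimed concavity.

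For the characterisation of the interior in (iii), write $C_p(\phi) := \partial\hat{\phi}(p) = \Lag_P^\phi(p)\cap Y$. Since $C_p(\phi)$ is convex, $\MA_Y[\phi](p) = \Haus^d(C_p(\phi))>0$ is equivalent to $C_p(\phi)$ having non-empty interior in $\Rsp^d$. For the direction ``$\MA_Y[\phi](p) > 0$ for every $p$ implies $\phi \in \intp(\mK_Y(P))$'', I pick $y_p \in \intp(C_p(\phi))$: by construction $y_p \in \intp(Y)$ and each inequality $\sca{q-p}{y_p} \leq \phi(q) - \phi(p)$ is strict for $q\neq p$. These open conditions persist under any sufficiently small $\nr{\cdot}_\infty$-perturbation $\tilde{\phi}$ of $\phi$, so $y_p \in C_p(\tilde{\phi})$ for all $p$ simultaneously, placing $\tilde{\phi}$ in $\mK_Y(P)$.

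The converse is the main technical point. Arguing by contrapositive, I assume $\MA_Y[\phi](p_0)=0$ and show that $\phi$ lies on the boundary of $\mK_Y(P)$. If $\phi$ were interior, then for every small $\eps>0$ the perturbation $\phi + \eps \one_{\{p_0\}}$ would lie in $\mK_Y(P)$, supplying a point $y_\eps \in Y$ satisfying $\sca{q-p_0}{y_\eps} \leq \phi(q)-\phi(p_0) - \eps$ for all $q \neq p_0$. Assuming $Y$ has non-empty interior (the only non-degenerate case, for otherwise $\MA_Y \equiv 0$), a small convex combination of $y_\eps$ with a fixed point of $\intp(Y)$ yields $y_\eps' \in \intp(Y)$ that still satisfies the strict inequalities with a slack of order $\eps$; inflating $y_\eps'$ by this remaining slack exhibits an open Euclidean ball inside $C_{p_0}(\phi)$, contradicting $\Haus^d(C_{p_0}(\phi))=0$. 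The delicate step is to choose the combination parameter small enough that moving $y_\eps$ into $\intp(Y)$ does not entirely absorb the $\eps$-slack introduced by the perturbation.
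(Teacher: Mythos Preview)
Your arguments for (i) and (ii) are correct and coincide with the paper's: both use Lemma~\ref{lem:convex} together with the Minkowski identity $(1-t)\B(v_0,r_0)+t\B(v_1,r_1)=\B(v_t,(1-t)r_0+tr_1)$ to obtain the concavity inequality at the endpoints (and hence, by reparametrisation, on the whole segment).

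For the characterisation of the interior, your proof is correct but follows a genuinely different route from the paper's, and is in fact more complete. You prove both implications; the paper only argues the direction ``interior $\Rightarrow$ all $\MA_Y[\phi](p)>0$'' and leaves the converse implicit. For that shared direction, the paper reasons geometrically: if $\partial\phi_{\mK_Y}(p)$ has empty interior, it picks $y$ in this cell, observes that $y$ must lie on the common boundary of Laguerre cells of several other points $p_1,\dots,p_k$ with $k\geq 2$, deduces that $p$ sits in the relative interior of $\conv\{p_1,\dots,p_k\}$ with $\phi_{\mK_Y}$ affine there, and concludes that $\phi$ cannot be interior. Your argument is instead analytic and perturbative: from interiority you extract a point $y_\eps\in Y$ satisfying all Laguerre inequalities at $p_0$ with slack $\eps$, then push it slightly toward a fixed $z\in\intp(Y)$ and use the remaining slack to inflate a ball inside $\Lag_P^\phi(p_0)\cap Y$. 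This is more elementary and self-contained (it avoids the combinatorial step ``$k\geq 2$'' and the affine-hull claim), and it is directly quantitative; the paper's argument, on the other hand, explains structurally \emph{why} a degenerate cell forces $\phi$ to lie on an affine face of $\mK_Y(P)$. Your outline of the ``delicate step'' is accurate: choosing the convex-combination parameter $\delta$ of order $\eps$ (so that the $Y$-inradius $\delta\rho$ and the residual slack $(1-\delta)\eps-\delta M$ are both positive, with $M=\max_{q\neq p_0}\abs{\phi(q)-\phi(p_0)-\sca{q-p_0}{z}}$) makes the argument go through.
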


\begin{proof}
  The first item is a simple consequence of Lemma~\ref{lem:convex}. In
  order to prove the second item, we first remark that setting $R_i :=
  \d(v_i, \Rsp^d\setminus \partial \hat{\phi}_i(p))$, one has:
  $\B(v_i,R_i) \subseteq \partial \hat{\phi}_i(p)$. Using the second
  inclusion from Lemma~\ref{lem:convex} and the explicit formula for
  the Minkowski sum of balls, we get:
  $$(1-t) \B(v_0,R_0) + t \B(v_1,R_1) = \B(v_t, (1-t) R_0 + t R_1) \subseteq \partial \hat{\phi}_t(p).$$
  This implies the desired concavity property:
\begin{equation*}
\dd(v_t, \Rsp^d \setminus \partial \hat{\phi}_t(p)) \geq (1-t) R_0 + t R_1.
\end{equation*}
As for the last assertion, assume by contradiction that there is a
$p\in P$ such that $\partial \phi_{\mK_Y}(p)$ has empty interior, and
let $y\in \partial \phi_{\mK_Y}(p)$. Since the Laguerre cells cover
the space, this means that $y$ also belongs to (the boundary) of
Laguerre cells with nonempty interior corresponding to points $p_1,
\dots, p_k\in P^k$ for some $k\ge 2$. In this case necessarily, $p$ is
in the relative interior of the convex hull of $\{p_1, \dots, p_k\}$
and $ \phi_{\mK_Y}$ is affine on this convex hull, contradicting
interiority of $\phi$.
\end{proof}

\section{Convex discretization of displacement-convex functionals}
\label{sec:convexity}

In the discrete setting, the reference probability density $\rho$ is
replaced by a probability measure $\mu$ on a finite point set. Since
the subdifferential of a convex function $\phi$ can be multi-valued,
the pushforward $\nabla \phi_{\#}\mu$ is not uniquely defined in
general. In order to maintain the convexity properties of the three
functionals in our discrete setting, we will need to consider two
different type of push-forwards.

\begin{definition}[Push-forwards] Let $\mu$ be a probability measure
  supported on a finite point set $P$, i.e. $\mu = \sum_{p \in P}
  \mu_p \delta_p$. We consider a convex interpolate with gradient
  $(\phi,G_\phi)$ in $\mK_Y^G(P)$, and we define two ways of pushing
  forward the measure $\mu$ by the gradient of $\phi_{\mK_Y}$.
\begin{itemize}
\item The first way consists in moving each Dirac mass $\mu_p\delta_p$
  to the selected subgradient $G_\phi(p)$, thus defining
\begin{equation} G_{\phi\#}\mu := \sum_{p \in P} \mu_p \delta_{G_\phi(p)}.
\end{equation}
\item The second possibility, is to spread each Dirac mass
  $\mu_p\delta_p$ on the whole subdifferential $\partial
  \phi_{\mK_Y}(p)$. This defines, when $\phi$ is in the interior of $\mK_Y(P)$,   
 an absolutely continuous measure:
\begin{equation} G^\ac_{\phi\#} \mu := \sum_{p \in P} \mu_p
  \frac{\restr{\Haus^d}{\partial \phi_{\mK_Y}(p)}}{\Haus^d(\partial
    \phi_{\mK_Y}(p))}.
\end{equation}
\end{itemize}
\end{definition}
\begin{remark}
  Note that in both cases, the mass of $\mu$ located at $p$ is
  transported into the subdifferential $\partial\phi_{\mK_Y}(p)$. This
  implies that the transport plan between $\mu$ and $G_{\phi\#}\mu$
  induced by this definition is optimal, and similarly for
  $G_{\phi\#}^\ac\mu$. We therefore have an explicit expression for
  the squared Wasserstein distance between $\mu$ and these
  pushforwards:
\begin{align}
  \Wass^2_2(\mu,G_{\phi\#}\mu) &= \sum_{p\in P} \mu_p \nr{p - G_\phi(p)}^2  \\
  \Wass^2_2(\mu,G^\ac_{\phi\#}\mu) &= \sum_{p\in P}
  \frac{\mu_p}{\Haus^d(\partial \phi_{\mK_Y}(p))} \int_{\partial
      \phi_{\mK_Y}(p)} \nr{p - x}^2 \dd x
\end{align}
\end{remark}

\begin{theorem} Given a bounded convex set $Y$ and a measure $\mu$
  supported on a finite set $P$, and under hypothesis \HE and \HU,
  the maps \label{th:convex}
\begin{align}
 (\phi,G_\phi) \in \mK^G_Y(P) &\mapsto \mE(G_{\phi\#} \mu) \label{eq:conv:E}\\
 \phi \in \mK_Y(P) &\mapsto \mU(G^\ac_{\phi\#} \mu) \label{eq:conv:U}
\end{align}
are convex. Moreover, under assumptions \HEp and \HUp the functional
\begin{equation} (\phi,G_\phi) \in \mK^G_Y(P) \mapsto \mF(\phi) := \mE(G_{\phi\#} \mu) + \mU(G^\ac_{\phi\#} \mu)
\end{equation}
has the following strict convexity property: given two functions
$\phi_0$ and $\phi_1$ in $\mK_Y^G(P)$, and $\phi_t = (1-t)\phi_0 +
t\phi_1$ with $t\in (0,1)$, then
$$ \mF(\phi_t) \leq (1-t) \mF(\phi_0) + t \mF(\phi_1), $$
with equality only if $\phi_0 - \phi_1$ is a constant. In particular,
there is at most one minimizer of $\mF$  up to an additive constant.
\end{theorem}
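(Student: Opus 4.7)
The plan is to prove the two convexity statements \eqref{eq:conv:E} and \eqref{eq:conv:U} separately by reducing each functional to an explicit sum of elementary convex pieces, and then to handle strict convexity by tracking equality cases in the underlying inequalities.

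For $\mE(G_{\phi\#}\mu)$, expanding the definition $G_{\phi\#}\mu=\sum_{p\in P}\mu_p\delta_{G_\phi(p)}$ gives
\begin{equation*}
\mE(G_{\phi\#}\mu)=\sum_{p,q\in P}\mu_p\mu_q\,W(G_\phi(p),G_\phi(q))+\sum_{p\in P}\mu_p\,V(G_\phi(p)).
\end{equation*}
Each coordinate evaluation $(\phi,G_\phi)\mapsto G_\phi(p)$ is an affine function on the ambient vector space containing $\mK_Y^G(P)$, so under \HE this is a sum of convex functions post-composed with affine maps, hence convex.

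For $\mU(G_{\phi\#}^\ac\mu)$, I would exploit that the density of $G_{\phi\#}^\ac\mu$ is piecewise constant on the Laguerre cells. Setting $V_p(\phi):=\MA_Y[\phi](p)$, $r_p(\phi):=(V_p(\phi)/\mu_p)^{1/d}$, and $\tilde U(r):=r^d U(r^{-d})$, the change of variable gives
\begin{equation*}
\mU(G_{\phi\#}^\ac\mu)=\sum_{p\in P}V_p(\phi)\,U\!\bigl(\mu_p/V_p(\phi)\bigr)=\sum_{p\in P}\mu_p\,\tilde U(r_p(\phi)).
\end{equation*}
Hypothesis \HU states exactly that $\tilde U$ is convex and non-increasing. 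Applying the Brunn--Minkowski inequality to the inclusion $\partial\hat\phi_t(p)\supseteq(1-t)\partial\hat\phi_0(p)+t\partial\hat\phi_1(p)$ of Lemma \ref{lem:convex} shows that each $r_p$ is concave on $\mK_Y(P)$, and since the composition of a non-increasing convex function with a concave function is convex, each summand is convex.

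For the strict convexity statement, I would assume $\mF(\phi_t)=(1-t)\mF(\phi_0)+t\mF(\phi_1)$; since $\mE$ and $\mU$ are individually convex, both must be affine in $t$ along the segment. Strict convexity of $V$ or $W$ in \HEp then forces $G_{\phi_0}(p)=G_{\phi_1}(p)$ for every $p$ in the support of $\mu$, while strict convexity of $\tilde U$ in \HUp forces both $r_p(\phi_0)=r_p(\phi_1)$ and equality in the Brunn--Minkowski inequality, so that the cells $\partial\hat\phi_0(p)$ and $\partial\hat\phi_1(p)$ are translates of equal volume. The main obstacle is the final geometric step: combining these translation relations with the tiling property of the Laguerre cells (interior-disjoint, with union covering $Y$) to conclude that every translation vector must vanish, so that $\partial\hat\phi_0(p)=\partial\hat\phi_1(p)$ for all $p\in P$; the standard fact that a Laguerre decomposition of $Y$ determines its weights up to an additive constant then gives that $\phi_0-\phi_1$ is constant on $P$.
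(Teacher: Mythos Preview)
Your convexity arguments for $\mE$ and $\mU$ are correct and coincide with the paper's (very brief) proof, which just invokes the convexity of $V,W$ for \eqref{eq:conv:E} and Lemma~\ref{lemma:logconcavity} together with McCann's condition for \eqref{eq:conv:U}. Your use of the $1/d$-power form of Brunn--Minkowski (concavity of $r_p$) is in fact the natural version for applying \HU directly; Lemma~\ref{lemma:logconcavity} only records log-concavity, but its proof works verbatim with the sharper inequality.

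For the strict-convexity clause the paper gives no details, so your sketch already goes further than the paper's own proof. Your equality analysis correctly yields $\MA_Y[\phi_0](p)=\MA_Y[\phi_1](p)$ for every $p$ from the strict convexity of $\tilde U$. The obstacle you flag---deducing from ``the cells are translates of one another and both families tile $Y$'' that all translation vectors vanish---is genuine on your chosen route, and a direct tiling argument is not easy to complete. It is, however, avoidable: equality of the Monge--Amp\`ere values alone already forces $\phi_0-\phi_1$ to be constant, by the standard semi-discrete uniqueness argument. The concave functional
\[
L(\phi)\;=\;-\sum_{p\in P}\nu_p\,\phi(p)\;+\;\int_Y \min_{p\in P}\bigl(\phi(p)-\sca{y}{p}\bigr)\,\dd y,\qquad \nu_p:=\MA_Y[\phi_0](p),
\]
satisfies $\partial L/\partial\phi(q)=\MA_Y[\phi](q)-\nu_q$, so both $\phi_0$ and $\phi_1$ are critical points, hence maximizers. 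Its Hessian $\JMA_Y[\phi]$ is the negative of a weighted graph Laplacian on the adjacency graph of the Laguerre cells (compare Section~\ref{sec:numerical}), whose kernel reduces to the constants as soon as that graph is connected---which it is whenever all cells have positive volume in the convex set $Y$. Hence $L$ is strictly concave modulo constants and $\phi_0-\phi_1$ is constant, bypassing the Brunn--Minkowski equality case and your tiling step entirely. The hypothesis \HEp then separately pins down $G_{\phi_0}=G_{\phi_1}$.
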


\begin{proof} The proof of \eqref{eq:conv:U} uses the log-concavity of
  the discrete Monge-Ampère operator as in
  Lemma~\ref{lemma:logconcavity} and McCann's condition
  \cite{mccann1997convexity}. The proof of \eqref{eq:conv:E} is
  direct: if $(\phi_0,G_{\phi_0})$ and $(\phi_0,G_{\phi_0})$ belong to
  $\mK^G_Y(P)$, and $G_{\phi_t} := (1-t)G_{\phi_0} + t G_{\phi_1}$,
  then the convexity of $\mE$ follows from that of $V$ and $W$.
\end{proof}

\begin{figure}[t]
{
\centering
\includegraphics[height=3.6cm]{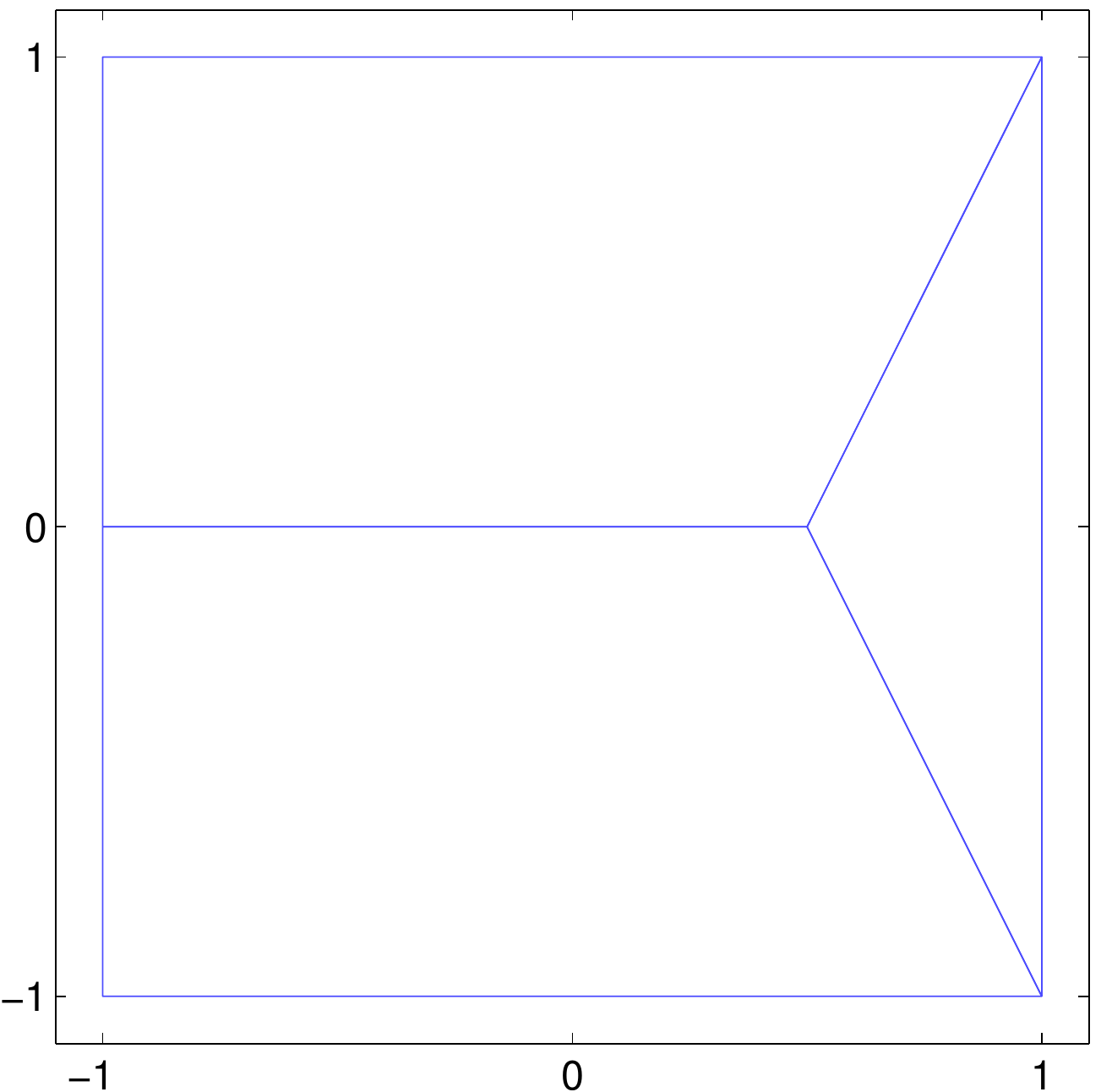}\,
\includegraphics[height=3.6cm]{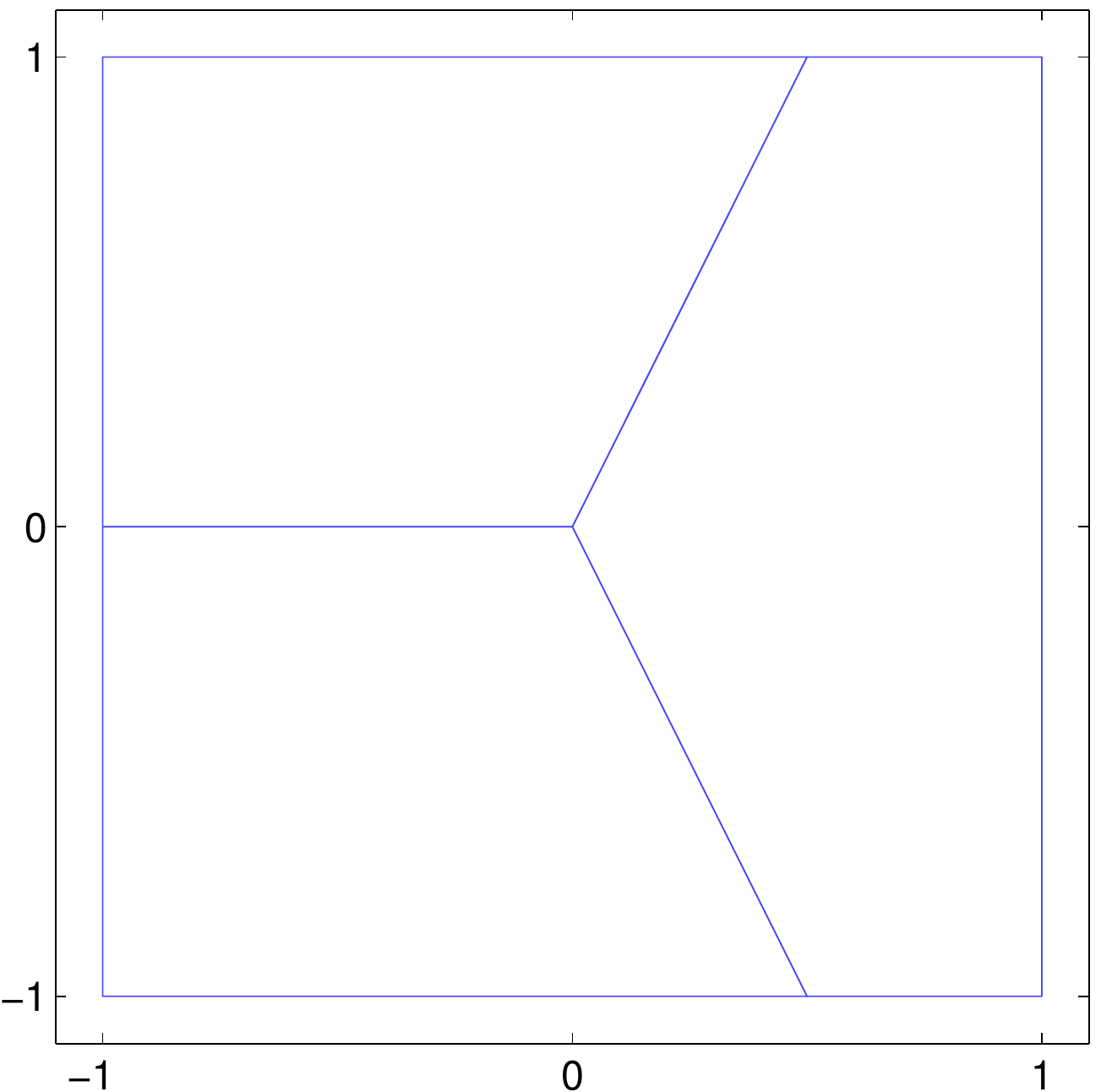}
\includegraphics[height=3.6cm]{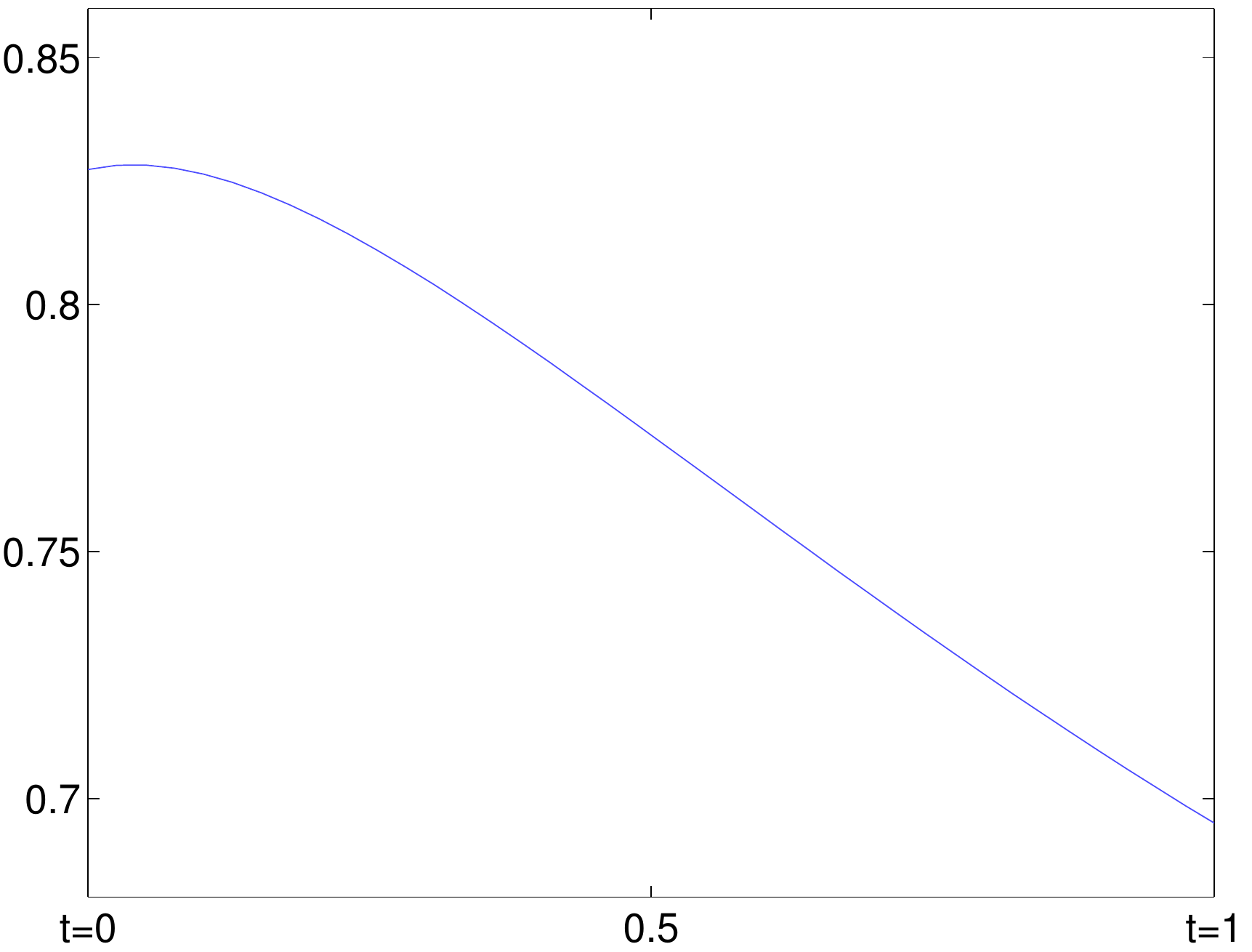}
}
\caption{We consider a point set $P = \{q, p_\pm\}$, with $q = (2,0)$
  and $p_\pm = (0,\pm 1)$, and a function $\phi_t$ which linearly
  interpolates between $\phi_0 = \chi_{\{q\}}$ and $\phi_1 = 0$.
  (Left) Laguerre cells $\Lag_P^{\phi_t}(p)$ intersected with the
  square $[-1,1]^2$ at $t=0$. (Middle) Laguerre cells at
  $t=1$. (Right) Graph of the second moment of the measure
  $G_{\phi_t{\#}}^\ac \mu$ as a function of $t$, showing the lack of convexity of a discretized energy. 
  \label{fig:potential-nonconvex}}
\end{figure}

\begin{remark} The convexity of the internal energy \eqref{eq:conv:U}
  also holds when considering the monotone discretization of the
  Monge-Ampère operator introduced by Oberman in
  \cite{oberman2008wide}.
\end{remark}

\begin{remark}
  It seems necessary to consider two notions of push-forward of a
  given measure $\mu$. Indeed, the internal energy of a measure that
  is not absolutely continuous is $+\infty$, so that it only makes
  sense to compute the map $\mU$ on the absolutely continuous measure
  $G_{\phi\#}^\ac \mu$. On the other hand, condition \HE is not
  sufficient to make the potential energy functional $\phi \in
  \mK^G_Y(P) \mapsto \mE(G^\ac_{\phi\#} \mu)$ convex. This can be seen
  on the example given in Figure~\ref{fig:potential-nonconvex}: let
  $Y=[-1,1]^2$ and $P = \{q, p_\pm\}$ with $q = (2,0)$ and $p_\pm =
  (0,\pm 1)$. We let $\phi_t$ be the linear interpolation between
  $\phi_0 := \one_{\{q\}}$, and $\phi_1 = 0$, and We let $\mu = 0.8 \delta_q
  + 0.1 \delta_{p_+} + 0.1 \delta_{p_-}$. The third column of
  Figure~\ref{fig:potential-nonconvex} displays the graph of the
  second moment of the absolutely continuous push-forward, i.e.
\begin{equation}
 t \mapsto \mE(G_{\phi_t\#}^\ac \mu), \hbox{ where } \mE(\nu) = \int_{\Rsp^d} \nr{x}^2 \dd \nu(x), 
\end{equation}
The graph shows that this function is not convex in $t$, even though
$\mE$ is convex under generalized displacement since it satisfies McCann's
condition \HE.
\end{remark}
\begin{remark}
  The two maps considered in the Theorem can be computed more
  explicitely:
\begin{align}
  \mE(G_{\phi\#} \mu) &= \sum_{p \in P} \mu_p V(G_\phi(p)) +
  \sum_{p,q\in P} \mu_p \mu_q W(G_{\phi(p)}, G_{\phi(q)})\\
  \mU(G^\ac_{\phi\#} \mu) &= \sum_{p\in P} U\left(\frac{\mu_p}{\MA_Y[\phi](p)}\right) \MA_Y[\phi](p)
\end{align}
In particular, when $\mU$ is the negative entropy ($U(r) = r \log r$), one has:
\begin{equation}
  \mU(G^\ac_{\phi\#} \mu) = - \sum_{p\in P} \mu_p\log(\MA_Y[\phi](p)).
\end{equation}
Consequently the internal energy term plays the role of a barrier for
the constraint set $\mK_Y(P)$, that is: if $\mU(G^\ac_{\phi\#}\mu)$ is
finite, then $\phi$ belongs to the interior of $\mK_Y(P)$. The same
behavior remains true if the function $U$ has super-linear growth at
infinity. This enables us to extend $\mU(G^\ac_{\phi\#}\mu)$ to the
whole space $\Rsp^P$, by setting it to $+\infty$ when
$\MA_Y[\phi](p)=0$ for some $p\in P$.
\end{remark}


\section{A convergence theorem}
\label{sec:convergence}

Let $X, Y$ be two convex domains in $\Rsp^d$, and $\mu$ be a
probability measure on $X$ which is absolutely continuous with respect
to the Lebesgue measure on $X$, and whose density $\rho$ is bounded
from above and below: $\rho \in [r,1/r]$, with $r>0$. We are
interested in the minimization problem
\begin{align}
  &\min_{\nu\in \Prob(Y)} \mF(\nu) =   \min \{\mF(\restr{\nabla \phi}{\#} \mu); \phi \in \mK_Y\}, \label{eq:Prob} \\
&\hbox{where }\mF(\nu) := \Wass_2^2(\mu,\nu) + \mE(\nu) + \mU(\nu),
\end{align}
and where the terms of the functional $\mF$ satisfy the following
assumptions:
\begin{itemize}
\item[(C1)] the energy $\mE$ (resp $\mU$) is weakly continuous  (resp. lower semicontinuous) on $\Prob(Y)$;
\item[(C2)] $\mU$ is an internal energy, defined as in \eqref{eq:U},
  where the integrand $U:\Rsp\to\Rsp$ is convex, $U(0)=0$ and $U$ has superlinear growth at infinity i.e. $\lim_{s\to \infty} s^{-1}U(s)=+\infty$. 
\end{itemize}


\begin{remark} Note that the condition (C2) is different from McCann's
  condition \HU for the displacement convexity of an internal
  energy. Among the internal energies that satisfy both McCann's
  conditions and (C1)--(C2), one can cite those that occur in the
  gradient flow formulation of the heat equation, where $U(r) = r\log
  r$, and of the porous medium equation, for which $U(r)
  =\frac{1}{m-1} r^m,$ with $m>1$. The superlinear growth assumption
  in (C2) ensures that the internal energy acts as a barrier for the
  convexity constraint in the approximated problem \eqref{eq:Probn}.
\end{remark}

\begin{theorem}[$\Gamma$-convergence]
  Assume (C1)-- (C2). Let $\mu_n$ be a sequence of probability measures
  supported on finite subsets $P_n\subseteq X$, converging weakly to
  the probability density $\rho$, and consider the discretized problem
\begin{equation}
\label{eq:Probn}
 \min_{(\phi,G_\phi) \in \mK^G_Y(P_n)} \Wass_2^2(\mu_n,G_{\phi_n\#} \mu_n) + \mE(G_{\phi_n\#} \mu_n) + \mU(G_{\phi_n\#}^\ac \mu_n).
\end{equation}
Then, there exists a minimizer  $\phi_n$ of \eqref{eq:Probn}. Moreover, the sequence of
absolutely continuous measure $\sigma_n := G_{\phi_n\#}^\ac \mu_n$ is a
minimizing sequence for the problem \eqref{eq:Prob}. If $\mF$ has a
unique minimizer $\nu$ on $\Prob(Y)$, then $\sigma_n$ converges weakly to
$\nu$.
\end{theorem}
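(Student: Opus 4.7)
The plan is to prove $\Gamma$-convergence of the discretized functionals to the continuous functional, from which both the minimizing sequence property and the conditional weak convergence will follow. Existence for \eqref{eq:Probn} is obtained first: $\mK_Y^G(P_n)$ is a convex subset of a finite-dimensional space, and after quotienting by the additive constants (fix $\phi(p_0)=0$) the slice $\{G_\phi(p)\in Y\}$ is bounded. On the interior $\{\MA_Y[\phi](p)>0~\forall p\}$ the functional is continuous, and the explicit formula
\begin{equation*}
\mU(G^\ac_{\phi\#}\mu_n)=\sum_{p\in P_n} U\!\left(\frac{\mu_{n,p}}{\MA_Y[\phi](p)}\right)\MA_Y[\phi](p)
\end{equation*}
together with the superlinear growth of $U$ (condition (C2)) forces blow-up at the boundary, providing the necessary coercivity.

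For the $\Gamma$-liminf inequality, I would take any sequence $(\phi_n,G_{\phi_n})$ with bounded energy, consider the envelopes $\hat\phi_n:=[\phi_n]_{\mK_Y}\in\mK_Y$, and note that these are equi-Lipschitz (gradients in the bounded set $Y$). After normalization they converge locally uniformly along a subsequence to some $\phi\in\mK_Y$, by Arzela-Ascoli. The key step is then to show $\sigma_n:=G^\ac_{\phi_n\#}\mu_n\rightharpoonup\nabla\phi_\#\mu$ weakly: first, a standard argument based on a.e.\ convergence of gradients of convex functions yields $\nabla\hat\phi_{n\#}\mu\rightharpoonup\nabla\phi_\#\mu$; second, one compares $\sigma_n$ with $\nabla\hat\phi_{n\#}\mu$, both measures being supported on the same Laguerre-cell tessellation of $Y$, with the discrepancy in per-cell masses controlled by $\mu_n\to\mu$ and the two-sided bound $r\leq\rho\leq 1/r$. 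Lower semi-continuity of $\nu\mapsto\Wass_2^2(\mu,\nu)$ and $\mU$, together with continuity of $\mE$ from (C1), then give $\liminf_n\mF_n(\phi_n)\geq\mF(\nabla\phi_\#\mu)\geq\inf\mF$.

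For the $\Gamma$-limsup, given any $\phi\in\mK_Y$ I would first regularize it, using item (iii) of the first lemma, into smooth uniformly convex $\tilde\phi\in\mK_Y$; then set $\phi_n:=\restr{\tilde\phi}{P_n}$ with $G_{\phi_n}(p):=\nabla\tilde\phi(p)$. Membership $\phi_n\in\mK_Y(P_n)$ is immediate because $\tilde\phi$ is itself an admissible candidate in the envelope definition, forcing $\hat\phi_n=\tilde\phi$ on $P_n$ with equality preserved. As $P_n$ becomes dense in $X$ (consequence of $\mu_n\rightharpoonup\mu$ together with the lower bound $\rho\geq r$), strict convexity of $\tilde\phi$ makes the Laguerre cells shrink uniformly in diameter, so $G^\ac_{\phi_n\#}\mu_n$ and $G_{\phi_n\#}\mu_n$ both weakly converge to $\nabla\tilde\phi_\#\mu$ and the energies converge accordingly. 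A diagonal argument in the regularization parameter completes the recovery sequence.

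Combining the two inequalities applied to the minimizers $\phi_n$ of \eqref{eq:Probn} yields $\mF_n(\phi_n)\to\min\mF$, showing that $\sigma_n$ is a minimizing sequence for \eqref{eq:Prob}. When the continuous minimizer $\nu$ is unique, tightness of $\{\sigma_n\}$ (all supported in the bounded set $Y$) together with the liminf inequality applied to any weakly convergent subsequence forces $\sigma_n\rightharpoonup\nu$. The hard part is the weak convergence $\sigma_n\rightharpoonup\nabla\phi_\#\mu$ in the liminf step: controlling the spreading of the discrete atoms of $\mu_n$ uniformly over the Laguerre cells against the genuine pushforward of the continuous measure $\mu$ is precisely where the two-sided density bound $\rho\in[r,1/r]$ and the assumption $\mu_n\rightharpoonup\mu$ are used in tandem, and this is the single step most likely to require a delicate quantitative argument rather than a soft compactness one.
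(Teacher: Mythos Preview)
Your $\Gamma$-limsup construction has a genuine gap at the regularization step. Item (iii) of the first lemma gives you $\tilde\phi\in\Class^1\cap\mK_Y^0$ close to $\phi$ in $\nr{\cdot}_\infty$, but the internal energy $\mU(\nabla\tilde\phi_\#\mu)$ depends on $\det\DD^2\tilde\phi$, a second-order quantity that sup-norm convergence of potentials does not control at all. Hence the diagonal argument you invoke cannot conclude $\mF(\nabla\tilde\phi_\#\mu)\to\mF(\nabla\phi_\#\mu)$, and you do not obtain a recovery sequence with the correct limiting energy. The paper handles this by working with the target \emph{measure} rather than the potential: first it shows (Step~3, via a density lemma for $\mU$) that the infimum of $\mF$ is approached by measures $\sigma$ with continuous density bounded above and below; then for each such $\sigma$ it constructs $\phi_n\in\mK_Y(P_n)$ via Brenier's theorem so that $\sigma(\partial[\phi_n]_{\mK_Y}(p))=\mu_n(\{p\})$ exactly (Step~4). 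The shrinking of the subdifferentials (Step~5) is then obtained not from uniform convexity of a regularized potential but from Caffarelli's $\Class^{1,\beta}$ boundary regularity for the continuous Brenier map between $\rho$ and $\sigma$: this is precisely the place where the two-sided density bound $\rho\in[r,1/r]$ is used, contrary to what you write.

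Your liminf argument also departs from the paper's, though less problematically. You propose to compare $\sigma_n$ with $\nabla\hat\phi_{n\#}\mu$ cell by cell and invoke the density bound there; the paper instead lets both $\sigma_n$ and $\nu_n:=G_{\phi_n\#}\mu_n$ converge (by compactness) to measures $\sigma,\nu$, observes that the limiting optimal plans from $\mu$ to each are supported on the graph of $\nabla\phi$, and concludes $\sigma=\nu$ directly (Step~2). This requires no density bound on $\rho$ and no quantitative comparison of cell masses, so it is both shorter and uses fewer hypotheses than the route you sketch.
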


\begin{step}
There exists a minimizer to \eqref{eq:Probn}.
\end{step}

\begin{proof}
Let $(\phi_n^k)_k$ be a minimizing sequence (which we can normalize by imposing $\phi_n^k (p)=0$ at a fixed $p\in P_n$). Since $Y$ is bounded, we may assume  that, up to some not relabeled subsequences $\phi_n^k$  and $G_{\phi_n^k}$ converge to some $(\phi_n, G_{\phi_n})$. We can also assume that $\hat{\phi}_n^k
  := [\phi_n^k]_{\mK_Y}$ converges uniformly to $\hat{\phi}_n=[\phi_n]_{\mK_Y}$. The convergence in the Wasserstein term and in $\mE$ is then obvious, it remains to prove a liminf inequality for the discretized internal energy. First note that thanks to (C2), we also have that there is a $\nu>0$ such that $\MA_Y[\phi_n^k](p)\ge \nu$ for every $k$ and every $p\in P_n$. Then observe that the internal energy can be written as
\[ \mU(G_{\phi_n^k\#}^\ac \mu_n):=\sum_{p\in P_n} F(p, \MA_Y[\phi_n^k](p)), \; F(p,t):=  t U\left(\frac{\mu_p}{t}\right)\]
so that $F(p,.)$ is nonincreasing thanks to (C2). It is then enough to prove that for every $p\in P_n$ one has:
\begin{equation}
 \limsup_k   \MA_Y[\phi_n^k](p)=  \limsup_k  \Haus^d (\partial \hat{\phi}_n^k(p))  \le    \Haus^d (\partial \hat{\phi}_n(p))
\end{equation} 
but the latter inequality follows at once from Fatou's Lemma and the fact that if $y$ belongs to  $\partial \hat{\phi}_n^k(p)$ for infinitely many $k$ then it also necessarily belongs to $\partial \hat{\phi}_n(p)$. This proves that $\phi_n$ solves \eqref{eq:Probn}. 
\end{proof}

Let $m$ and $m_n$ be the minima of \eqref{eq:Prob} and \eqref{eq:Probn}
respectively. Our goal now is to show that $\lim_{n\to\infty} m_n = m$. In
order to simplify the proof, we will keep the same notation for an
absolutely continuous probability measure and its density.

\begin{step}
$\lim\inf_{n\to\infty} m_n \geq m$
\end{step}
\begin{proof} For every $n$, let $\phi_n \in \mK_Y(P_n)$ be a
  minimizer of the discretized problem \eqref{eq:Probn}. By
  compactness of the set $\mK_Y$ (up to an additive constant), and
  taking a subsequence if necessary, we can assume that $\hat{\phi}_n
  := [\phi_n]_{\mK_Y}$ converges uniformly to a function $\phi$ in
  $\mK_Y$. We can also assume that both sequence of measures $\sigma_n
  := G_{\phi_n\#}^\ac \mu_n$ and $\nu_n := G_{\phi_n\#} \mu_n$
  converge to two measures $\sigma$ and $\nu$ for the Wasserstein
  distance. The difficulty is to show that these two measures $\nu$
  and $\sigma$ must coincide. Indeed, let $\pi_n$ (resp. $\pi_n'$) be
   optimal transport plans between $\mu_n$ and $\nu_n$
  (resp. $\mu_n$ and $\sigma_n$). Taking subsequences if necessary,
  these optimal transport plans converge to two transport plans $\pi$
  (resp. $\pi'$) between $\rho$ and $\nu$ (resp. $\rho$ and $\sigma$)
  that are supported on the graph of the gradient of $\phi$. Since the
  first marginal of $\pi$ and $\pi'$ coincide, one must have
  $\pi=\pi'$ and therefore $\nu = \sigma$.  The result then follows
  from the weak lower semicontinuity of $\mU$, and the continuity of
  $(\mu,\nu) \mapsto \Wass_2^2(\mu,\nu) + \mE(\nu)$.
\end{proof}

We now proceed to the proof that $\lim\sup_{n\to\infty} m_n \leq m$.
Our first step is to show that probability measures with a smooth
density bounded from below and above are dense in energy. More
precisely, we have:

\begin{step} 
$m = \min_{\eps > 0} \min \{\mF(\sigma); \sigma \in \Prob^\ac(Y) \cap \Class^0(Y), \eps \leq \sigma \leq 1/\eps\}$
\end{step}

\begin{proof} Let $\sigma$ be a probability density on $Y$ such that
  $\mF(\sigma) < +\infty$. Then, according to Corollary~1.4.3 in
  \cite{agueh2002existence}, there exists a sequence of probability
  densities $\sigma_n$ on $Y$ that satisfy the three properties:
\begin{itemize}
\item[(a)] For every $n>0$, $\sigma_n$ is bounded from above and below:
$$ 0 < \inf_{y\in Y} \sigma_n(y)  < \sup_{y\in Y} \sigma_n(y) < +\infty; $$
\item[(b)] $\sigma_n$ converges to $\sigma$ in $\LL^1(Y)$; 
\item[(c)] $\mU(\sigma_n) \leq \mU(\sigma)$.
\end{itemize} Moreover the proof of Corollary~1.4.3 in
\cite{agueh2002existence} can be modified by taking a smooth
convolution operator so as to ensure that each $\sigma_n$ is
continuous on $Y$. Our task is then to show that
$$ \lim\inf_{n\to\infty} \mF(\sigma_n) \leq \mF(\sigma), $$
where $\mF(\sigma) = \Wass^2_2(\mu,\sigma) + \mE(\sigma) +
\mU(\sigma)$. Thanks to (C1), and thanks to the Wasserstein continuity
of the terms $\sigma \mapsto \Wass^2_2(\mu,\sigma) + \mE(\sigma)$, we
only need to show that $\sigma_n$ converges to $\sigma$ in the
Wasserstein sense. This follows from the easy inequality
\begin{equation*}
  \Wass^2_2(\sigma,\sigma') \leq \nr{\sigma - \sigma'}_{\LL^1(Y)} \diam(Y)^2. 
  \qedhere
\end{equation*}
\end{proof}




\begin{step} Let $\sigma \in \Prob(Y) \cap \Class^0(Y)$, with
  $\eps\leq\sigma \leq 1/\eps$. Then, for every $n\geq 0$, there
  exists a convex interpolate $\phi_n \in \mK_Y(P_n)$ such that
\begin{equation} \forall p\in P_n,~ \sigma(\partial [\phi_n]_{\mK_Y}(p)) = \mu_n(\{p\}).
\end{equation}
\end{step}

\begin{proof}
By Breniers' theorem, there is  a convex potential $\psi_n$  on $Y$ such that  $\nabla \psi_n \# \sigma=\mu_n$, so that $\phi_n:=\psi_n^*$ has the desired property. 
\end{proof}

\begin{step} Assuming that the functions $\phi_n$ in $\mK_Y(P_n)$ are
  constructed as above, we can bound the diameter of their
  subdifferentials:
\begin{equation}
\lim_{n\to\infty} \max_{p\in P_n} \diam(\partial [\phi_n]_{\mK_Y}(p)) = 0.
\label{eq:diam}
\end{equation}
\end{step}

\begin{proof} Let $\hat{\phi} \in \mK_Y$ be a potential for the
  quadratic optimal transport problem between $\rho$ and $\sigma$. Let
  $\hat{\phi}_n := [\phi_n]_{\mK_Y}$ and $\psi = \hat{\phi}^*$ and
  $\psi_n = \hat{\phi}_n^*$. First, we add a constant to $\phi$ and
  $\phi_n$ such that the integral of $\psi$ and $\psi_n$ over $\sigma$
  is zero,
$$\int_Y \psi(y) \sigma(y) \dd y = \int_Y \psi_n(y) \sigma(y) \dd y = 0.$$
Poincaré's inequality on $Y$ with density $\sigma$ gives us
\begin{align*}
\int_{Y} \abs{\psi_n(y)  - \psi(y)}^2 \sigma(y) \dd y 
&\leq \const(p,Y,\sigma) \int_{Y} \nr{\nabla \psi_n - \nabla\psi(y)}^2
\sigma(y) \dd y,
\end{align*}
and the weak continuity of optimal transport plans then ensures that
the right-hand term converges to zero. Noting that $\psi_n$ and $\psi$
are convex on $Y$ and have a bounded Lipschitz constant, because the
gradients $\nabla\psi,\nabla \psi_n$ belong to $X$, this implies that
$\psi_n$ converge uniformly to $\psi$. Taking the Legendre transform,
this shows that $\hat{\phi}_n$ converges uniformly to $\hat{\phi}$ on
the compact domain $X$.

We now prove \eqref{eq:diam} by contradiction, and we assume that
there exists a positive constant $r$, and a sequence of points
$(p_n)$, with $p_n \in P_n$ and such that there exists $y_n, y_n'
\in \partial\hat{\phi}_n$ with $\nr{y_n - y_n'} \geq r$. By
compactness, and taking subsequences if necessary, we can assume that
$p_n$ converges to a point $p$ in $X$ and that the sequences $(y_n)$
and $(y_n')$ converge to two points $y,y'$ in $Y$ with $\nr{y-y'} \geq
r$. Since the point $y_n$ belongs to $\partial\hat{\phi}_n(p_n)$, one
has:
$$ \forall x \in X,~ \hat{\phi}_n(x) \geq \hat{\phi}_n(p_n)+\sca{y_n}{x-p}.$$
Taking the limit as $n$ goes to $\infty$, this shows us that $y$ (and
similarly $y'$) belongs to $\partial\hat{\phi}(p)$, so that
$\diam(\partial\hat{\phi}(p)) \geq r$. The contradiction then follows
from Caffarelli's regularity result \cite{caffarelli1992boundary}: under the  assumptions on the
supports and on the densities, the map $\hat{\phi}$ is
$\Class^{1,\beta}$ up to the boundary of $X$. In particular, the
subdifferential of $\hat{\phi}$ must be a singleton at every point of
$X$, thus contradicting the lower bound on its radius.
\end{proof}

\begin{step} Let $\sigma_n := G_{\phi_n\#}^\ac \mu_n$ and $\nu_n :=
  G_{\phi_n\#} \mu_n$, where $\phi_n$ is defined above. Then,
\begin{align}
&\lim_{n\to\infty} \nr{\sigma_n - \sigma}_{\LL^\infty(Y)} = 0.\\
&\lim_{n\to\infty} \Wass_2(\nu_n,\sigma) = 0. \label{eq:nunsigma}
\end{align}
\end{step}

\begin{proof} First, note that since $\sigma$ is continuous on a
  compact set, it is also uniformly continuous. For any $\delta > 0$,
  there exists $\eps>0$ such that $\nr{x-y}\leq \eps$ implies
  $\abs{\sigma(x)-\sigma(y)}\leq \delta$. Using
  Equation~\eqref{eq:diam}, for $n$ large enough, the sets $V_p
  := \partial[\phi_n]_{\mK_Y}(p)$ have diameter bounded by $\eps$ for
  all point $p$ in $P_n$. By definition, the density $\sigma_n$ is
  equal to
\begin{equation}
  \sigma_n = \sum_{p\in P} \hat{\sigma}_p \chi_{V_p} \hbox{ with } \hat{\sigma}_p := \frac{1}{\Haus^d(V_p)}
  \int_{V_p} \sigma(x) \dd x.
  \label{eq:sigman}
\end{equation}
By the uniform continuity property, on every cell $V_p$ one has
$\abs{\sigma(x) - \hat{\sigma}_p} \leq \delta$, thus proving
$\nr{\sigma_n - \sigma}_{\LL^\infty(Y)} \leq \delta$ for $n$ large
enough. This implies that $\sigma_n$ converges to $\sigma$ uniformly,
and a fortiori that $\lim_{n\to\infty} \Wass_2(\sigma_n,\sigma) = 0$. Then, 
\begin{equation}
\Wass_2(\nu_n,\sigma) \leq \Wass_2(\nu_n,\sigma_n) + \Wass_2(\sigma_n,\sigma).\label{eq:triangle}
\end{equation}
Moreover, one can bound the Wasserstein distance explicitely between
$\sigma_n$ and $\nu_n$ by considering the obvious transport plan on
each of the subdifferentials $(\partial[\phi_n]_{\mK_Y}(p))_{p\in P_n}$:
\begin{equation}
\Wass^2_2(\nu_n,\sigma_n) \leq \sum_{p\in P_n} \diam(\partial[\phi_n]_{\mK_Y}(p)) \mu_p \leq \max_{p\in P_n} \diam(\partial[\phi_n]_{\mK_Y}(p)).
\label{eq:boundnusigma}
\end{equation}
The second statement \eqref{eq:nunsigma}  follows from Eqs.
\eqref{eq:triangle}, \eqref{eq:boundnusigma} and \eqref{eq:diam}.
\end{proof}

\begin{step} $\lim_{n\to\infty} \Wass_2^2(\mu_n,\nu_n) + \mE(\nu_n) +
  \mU(\sigma_n) = \Wass_2^2(\mu,\sigma) + \mE(\sigma) + \mU(\sigma)$
\end{step}

\begin{proof} The convergence of the first two terms follows from the
  Wasserstein continuity of the map $(\mu,\nu) \in \Prob(Y) \mapsto
  \Wass_2^2(\mu,\nu) + \mE(\nu)$. In order to deal with the third term,
  we will assume that $n$ is large enough, so that the densities
  $\sigma, \sigma_n$ belong to the segment $S_{\eps/2} = [\eps/2,
  2/\eps]$. The integrand $U$ of the internal energy is convex on
  $\Rsp$, and therefore Lipschitz with constant $L$ on $S_{\eps/2}$,
  so that
  \begin{align*}
\abs{\int_Y U(\sigma_n) \dd x - \int_Y U(\sigma(x)) \dd x} &\leq 
\int_Y  \abs{U(\sigma_n) - U(\sigma(x))} \dd x \\
&\leq L \nr{\sigma_n - \sigma}_{\LL^\infty(Y)}.\qedhere
\end{align*}
\end{proof}

\section{Numerical results}
\label{sec:numerical}

\subsection{Computation of the Monge-Ampère operator}
In this paragraph we explain how to evaluate the discretized internal
energy of $G_{\phi\#}^\ac\mu_P$, where $\phi$ is a discrete convex
function in $\mK_Y(P)$, and $Y$ is a polygon in the euclidean
plane. 
Thanks to the equation
\begin{equation}
  \mU(G_{\phi\#}^\ac\mu_P) 
  = \sum_{p\in P} U\left(\mu_p/\MA_Y[\phi](p)\right)\MA_Y[\phi](p),
\end{equation}
one can see that the internal energy and its first and second
derivatives can be easily computed if one knows how to evaluate the
discrete Monge-Ampère operator and its derivatives with respect to
$\phi$. Our assumptions for performing this computation will be the
following:
\begin{itemize}
\item[(G1)] the domain $Y$ is a convex polygon and its boundary can be
  decomposed as a finite union of segments $S = \{s_1,\hdots,s_k\}$
\item[(G2)] the points in $P$ are in generic position, i.e. (a) there
  does not exist a triple of collinear points in $P$ and (b) for any
  pair $p,q$ of distinct points in $P$, there is no segment $s$ in $S$
  which is collinear to the bisector of $[pq]$.
\end{itemize}
The Jacobian matrix of the discrete Monge-Ampère operator is a square
matrix denoted $(\JMA_Y[\phi])_{p,q\in P}$, while its Hessian is a
$3$-tensor denoted $(\HMA_Y[\phi])_{p,q,r\in P}$. The entries of this
matrix and tensor are given by the formulas
\begin{align}
\JMA_Y[\phi]_{pq} &:= \frac{\partial\MA_Y[\phi](p)}{\partial \one_q},\\
\HMA_Y[\phi]_{pqr} &:= \frac{\partial^2\MA_Y[\phi](p)}{\partial \one_r\partial \one_q},
\end{align}
where $\one_p$ denotes the indicator function of a point $p$ in
$P$. The goal of the remaining of this section is to show how the
computation of the Jacobian matrix and the Hessian tensor are related
to a triangulation which is defined from the Laguerre cells by
duality.
\subsubsection{Abstract dual triangulation}
  Given any function $\phi$ on $P$, we introduce a notation for the
  intersection of the Laguerre cell of $P$ with $Y$, and we extend
  this notation to handle boundary segments as well. More precisely,
  we set:
\begin{equation}
\begin{aligned}
\forall p\in P,~  V^\phi(p) &:= \Lag_P^\phi(p) \cap Y,\\
\forall s\in S,~  V^\phi(s) &:= s.
\end{aligned}
\end{equation}
We also introduce a notation for the finite intersections of these cells:
\begin{equation}
\forall p_1,\hdots,p_s \in P\cup S,~ V^\phi(p_1\hdots p_s) := V^\phi(p_1)\cap ...\cap V^\phi(p_s)
\end{equation}
The decomposition of $Y$ given by the cells $V^\phi(p)$ induces an
abstract dual triangulation $T^\phi$ of the set $P\cup S$, whose
triangles and edges are characterized by:
\begin{itemize}
\item[(i)] a pair $(p,q)$ in $P\cup S$ is an edge of $T^\phi$ iff
  $V^\phi(pq) \neq \emptyset$;
\item[(ii)] a triplet $(p,q,r)$ in $P\cup S$ is a \emph{triangle} of
  $T^\phi$ iff $V^\phi(pqr) \neq \emptyset$.
\end{itemize}
An example of such an abstract dual triangulation is displayed in
Figure~\ref{fig:dual-triangulation}.

The construction of this triangulation can be performed in time
$\mathrm{O}(N\log N + k)$, where $N$ is the number of points and $k$
is the number of segments in the boundary of $Y$. The construction
works by adapting the regular triangulation of the point set, which is
the triangulation obtained when $Y=\Rsp^d$, and for which there exists
many algorithms, see e.g. \cite{cgal}.


\subsubsection*{Jacobian of the Monge-Ampère operator} By
Lemma~\ref{lem:convex}, for any point $p$ in $P$, the function
$\phi\mapsto \MA_Y[\phi](p)$ is log-concave on the set
$\mK_Y(P)$. This function is therefore twice differentiable almost
everywhere on the interior of $\mK_Y(P)$, using Alexandrov's
theorem. The first derivatives of the Monge-Ampère operator is easy to
compute, and involves boundary terms: two points $p,q$ in $P$,
\begin{align}
  \JMA_Y[\phi]_{pq} &=
  \frac{\Haus^1(V^\phi(pq))}{\nr{p-q}} \hbox{ if } q\neq p \label{d:qp}\\
  \JMA_Y[\phi]_{pq} &= - \sum_{\substack{q \in P\\ (qp) \in T^\phi}}
  \frac{\Haus^1(V^\phi(pq))}{\nr{p-q}} \label{d:pp}
\end{align}
Note that every non-zero element in the square matrix corresponds to
an edge in the dual triangulation $T^\phi$.

\subsubsection{Hessian of the Monge-Ampère operator} We will not
include the computation of the second order derivatives, but we will
sketch how it can be performed using the triangulation
$T^\phi$. First, we remark that thanks to our genericity assumption,
for every triangle $pqr$ of $T^\phi$, the set $V^\phi(pqr)$ consists
of a single point, which we also denote $V^\phi(pqr)$. For any edge
$pq$ in the triangulation $T^\phi$, where $p,q$ are two points in $P$,
the intersection $V^\phi(pq)= V^\phi(q)\cap V^\phi(q)$ is a segment
$[x,y]$. The endpoint $x$ of this segment needs to be contained in a
third cell $V^\phi(r)$ for a certain element $r$ of $P\cup S \setminus
\{p,q\}$, so that $x = V^\phi(pqr)$. Similarly, there exists $r'$ in
$P\cup S \setminus \{p,q\}$ such that $y = V^\phi(pqr')$. One can
therefore rewrite the length of $V^\phi(pq)$ as
\begin{equation}
\Haus^1(V^\phi(pq)) = \nr{V^\phi(pqr) - V^\phi(pqr')}.
\label{d:length}
\end{equation}
The expression of the Hessian can be deduced from Equations
\eqref{d:qp}--\eqref{d:pp} and \eqref{d:length}, and from an explicit
computation for the point $V^\phi(pqr)$. Moreover, to each nonzero
element of the Hessian one can associate a point, an edge or a
triangle in the triangulation $T^\phi$. More precisely:
\begin{align*}
\HMA_Y[\phi]_{pqr} \neq 0 \Longrightarrow p=q=r
&\hbox{ or } (p=q \hbox{ and } (pr) \hbox{ is an edge of } T^\phi) \\
&\hbox{ or } (pqr) \hbox{ is a triangle of } T^\phi.
\end{align*}
In particular, the total number of non-zero elements of the tensor
$\HMA_Y[\phi]$ is at most proportional to the number $\abs{P}$ of
points plus the number $\abs{S}$ of segments.
\begin{figure}[t] { \centering
\includegraphics[height=4.5cm]{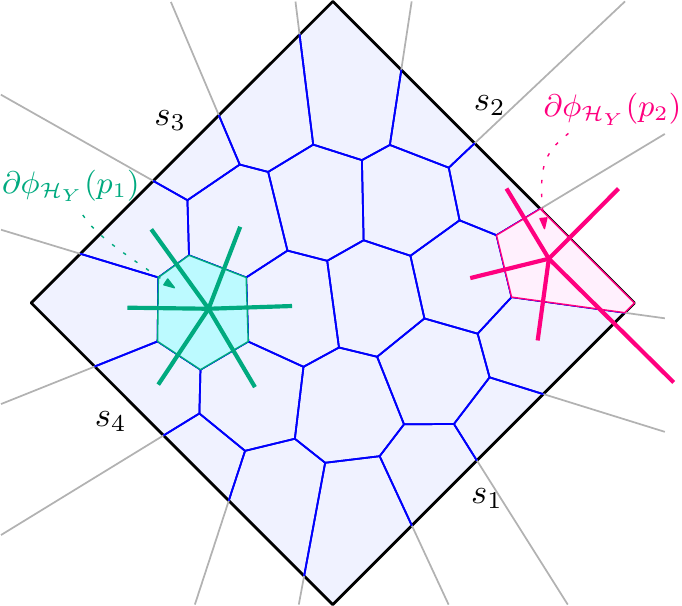}\,
\includegraphics[height=4.5cm]{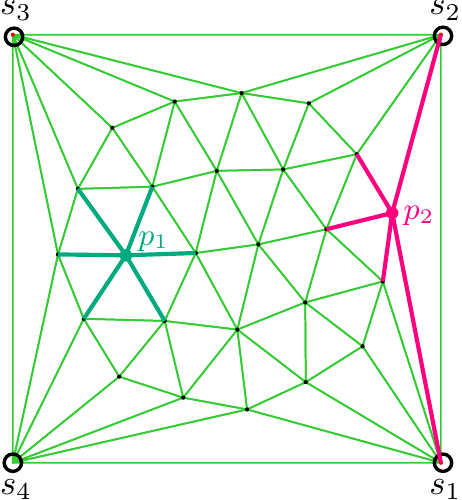}
}
\caption{On the left, the intersection of power cells with a convex polygon (in red), and on the right, the dual triangulation.
  \label{fig:dual-triangulation}}
\end{figure}

\subsection{Non-linear diffusion on point clouds} The first
application is non-linear diffusion in a bounded convex domain $X$ in
the plane. We are interested in the following PDE, where the parameter
$m$ is chosen in $[1-1/d,+\infty)$. A numerical application is
displayed on Figure~\ref{fig:nl} .
\begin{equation}
\left\{\begin{aligned}
\frac{\partial \rho}{\partial t} &=  \Delta \rho^m
 &\hbox{on } X \\
\nabla \rho  &\perp \mathbf{n}_X &\hbox{on } \partial X
\end{aligned}
\right.
\end{equation}
When $m=1$, this PDE is the classical heat equation with Neumann
boundary conditions. When $m<1$, this PDE provides a model of fast
dixffusion, while for $m>1$ it is a model for the evolution of gases in
a porous medium. Otto \cite{otto2001geometry} reinterpreted this PDE
as a gradient flow in the Wasserstein space for the internal energy
\begin{equation}
  \mU_m(\mu) = \left\{\begin{aligned}
      &\int_{\Rsp^d} U_m(\rho(x))  \dd x \hbox{ if } \mu \ll \Haus^d, \rho := \frac{\dd \mu}{\dd\Haus^d}, \\
      &+\infty \hbox{ if not.}
\end{aligned}
\right.,
\label{eq:Um}
\end{equation}
where $U_m(r) = \frac{r^m(x)}{m-1}$ when $m\neq 1$ and $U_1(r) = r\log
r$. A time-discretization of this gradient-flow model can be defined
using the Jordan-Kinderlehrer-Otto scheme: given a timestep $\tau >
0$ and a probability measure $\mu_0$ supported on $X$, one defines a
sequence of probability measures $(\mu_k)_{k\geq 1}$ recursively
\begin{equation}
\mu_{k+1} = \arg\min_{\mu \in \Prob(X)} \Wass_2^2(\mu_k,\mu) + \mU_m(\mu)
\label{eq:grad:nld}
\end{equation}
The energies involved in this optimization problem satisfy McCann's
assumption for displacement convexity, and our discrete framework is
therefore able to provide a discretization in space of Equation
\eqref{eq:grad:nld} as a convex optimization problem. We use this
discretization in order to construct the non-linear diffusion for a
finite point set $P_0$ contained in the convex domain $X$. Note that
for this experiment, we do not use the formulation involving the space
of convex interpolates with gradient, $\mK_X^G(P)$. For every function
$\phi$ in $\mK_Y(P)$, and every point $p$ in $P$, we select
explicitely a subgradient in the subdifferential $\partial
\phi_{\mK_Y}(p)$ by taking its Steiner point \cite{schneider1993convex}.

We start
with $\mu_0 = \sum_{p \in P} \delta_p/\abs{P}$ the uniform measure on
the set $P_0$, and we define recursively

\begin{equation}
\left\{
\begin{aligned}
&\phi_k = \arg\min \left\{ \frac{1}{2\tau} \Wass^2_2(\mu_k,G_{\phi\#}\mu_k) + \mU(G_{\phi\#}^\ac \mu_k); \phi \in \mK_{X}(P_k) \right \} \\
& \mu_{k+1} = G_{\phi_k\#}\mu_k,\\
&P_{k+1} = \spt(\mu_{k+1}) 
\end{aligned}
\right.,
\label{eq:nld:algo}
\end{equation}
where $G_\phi(p)$ is the Steiner point of $\partial \phi_{\mK_Y}(p)$.
This minimization problem is solved using a second-order Newton
method. Note that, as mentioned in the remark following
Theorem~\ref{th:convex}, the internal energy plays the role of a
barrier for the convexity of the discrete function $\phi$. When
second-order methods fail, one could also resort to more robust
first-order methods for the resolution of the optimization problem,
using for instance a projected gradient algorithm.

\begin{figure}
{\centering
\includegraphics[width=.5\textwidth,height=.5\textwidth]{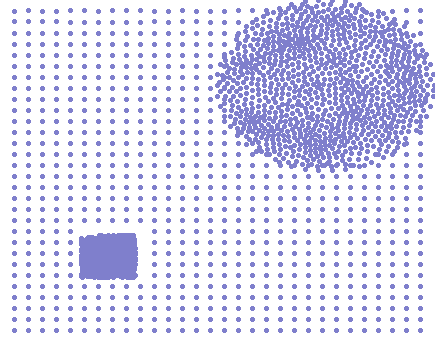}
}

\hspace{-.7cm}
\includegraphics[width=.2\textwidth,height=.2\textwidth]{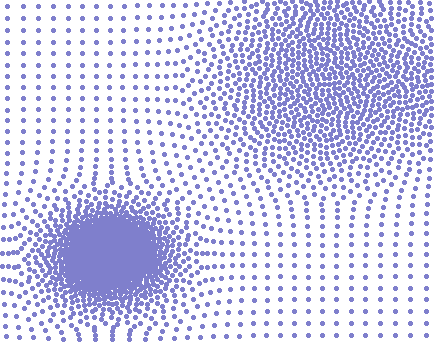}
\includegraphics[width=.2\textwidth,height=.2\textwidth]{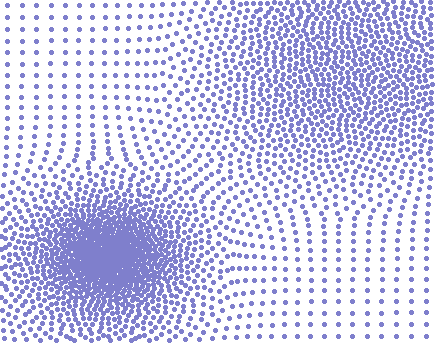}
\includegraphics[width=.2\textwidth,height=.2\textwidth]{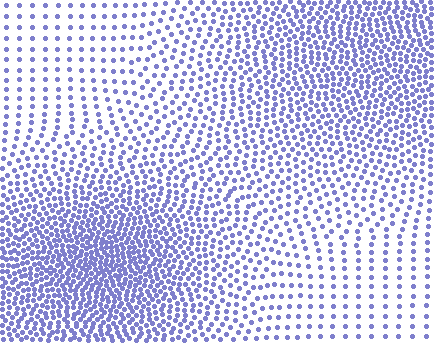}
\includegraphics[width=.2\textwidth,height=.2\textwidth]{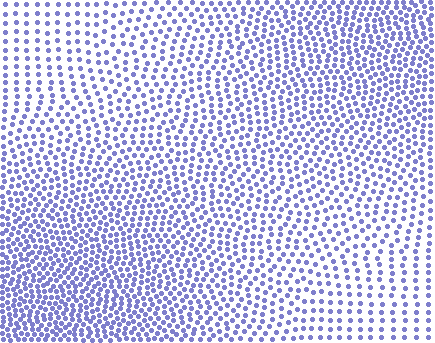}
\includegraphics[width=.2\textwidth,height=.2\textwidth]{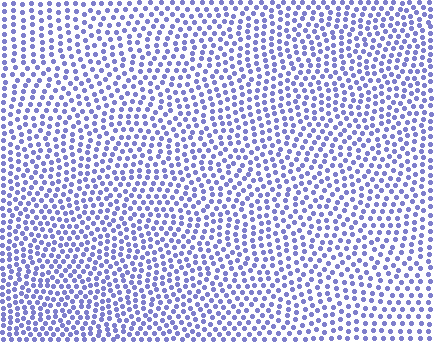}
\\
\hspace{-.7cm}
\includegraphics[width=.2\textwidth,height=.2\textwidth]{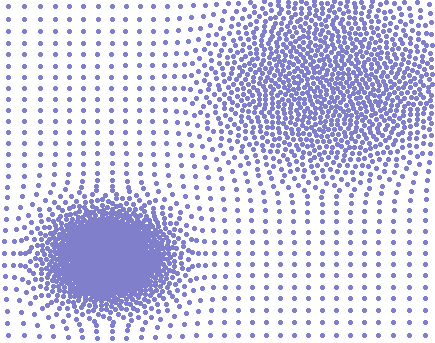}
\includegraphics[width=.2\textwidth,height=.2\textwidth]{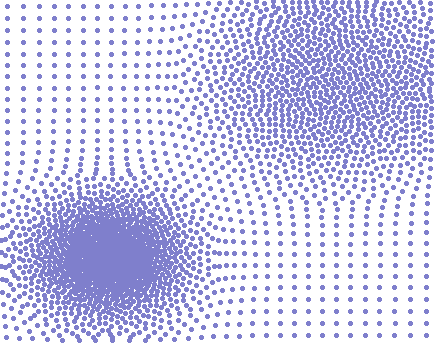}
\includegraphics[width=.2\textwidth,height=.2\textwidth]{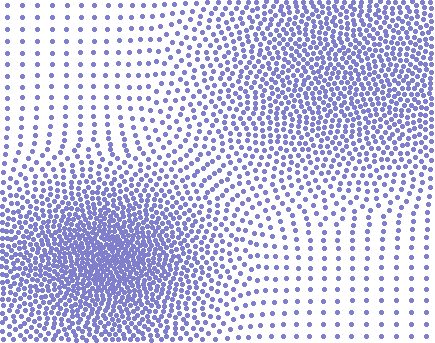}
\includegraphics[width=.2\textwidth,height=.2\textwidth]{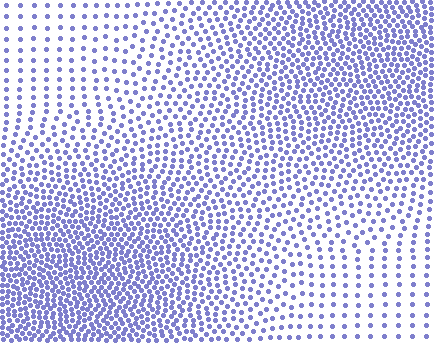}
\includegraphics[width=.2\textwidth,height=.2\textwidth]{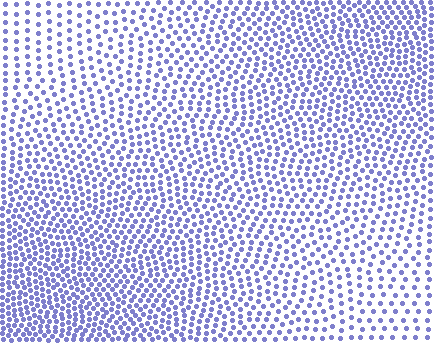}
\\
\hspace{-.7cm}
\includegraphics[width=.2\textwidth,height=.2\textwidth]{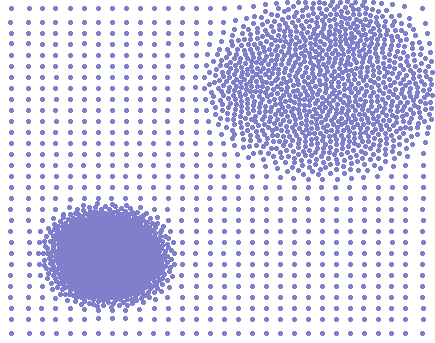}
\includegraphics[width=.2\textwidth,height=.2\textwidth]{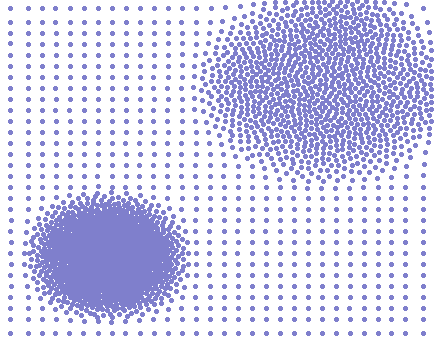}
\includegraphics[width=.2\textwidth,height=.2\textwidth]{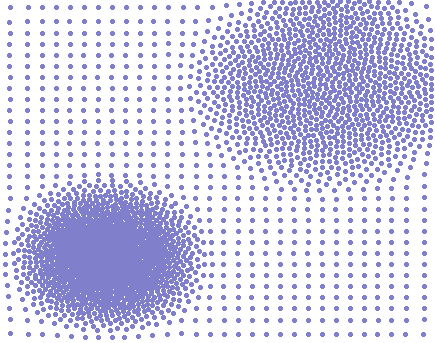}
\includegraphics[width=.2\textwidth,height=.2\textwidth]{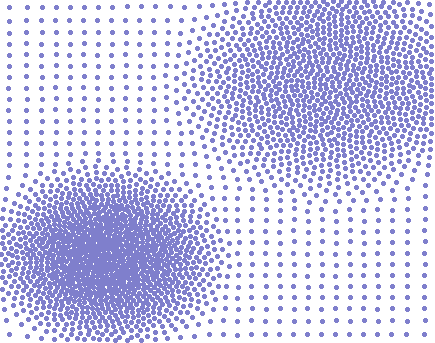}
\includegraphics[width=.2\textwidth,height=.2\textwidth]{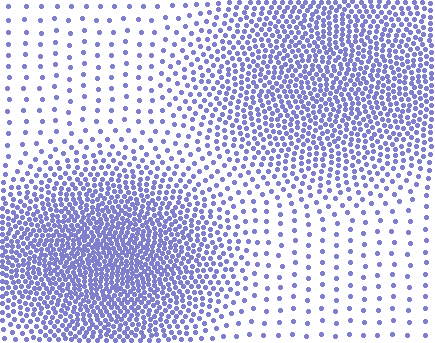}
\\
\hspace{-.7cm}
\includegraphics[width=.2\textwidth,height=.2\textwidth]{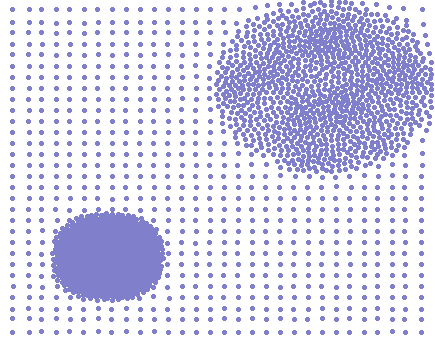}
\includegraphics[width=.2\textwidth,height=.2\textwidth]{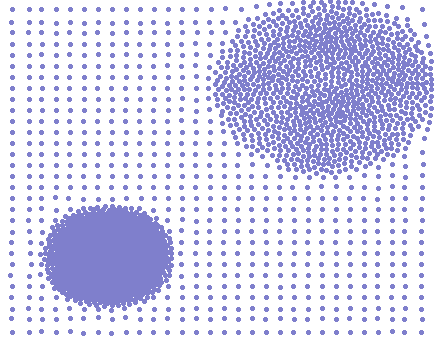}
\includegraphics[width=.2\textwidth,height=.2\textwidth]{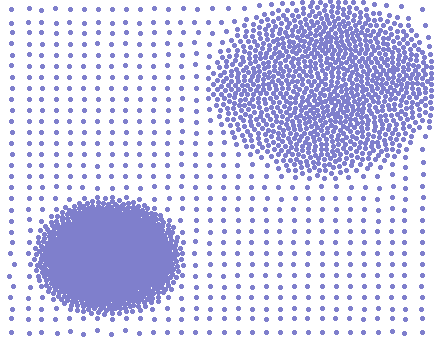}
\includegraphics[width=.2\textwidth,height=.2\textwidth]{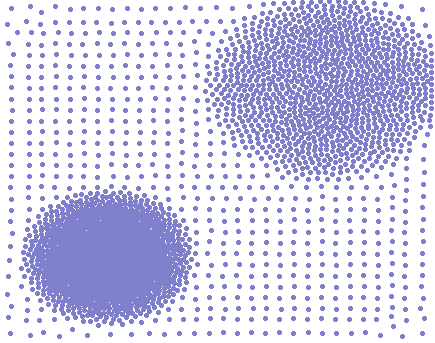}
\includegraphics[width=.2\textwidth,height=.2\textwidth]{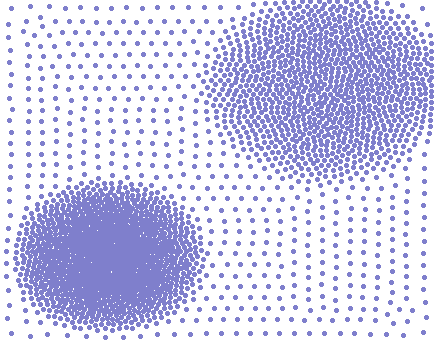}

\caption{(Top) The original point set $P_0$ in the square domain
  $X=[-2,2]^2$ contains 2\,900 points. (Bottom rows) The evolution of
  the point cloud $P_k$ at timesteps $k=5,10,20,40,80$ with
  $\tau=0.01$ is defined using Eq. \eqref{eq:nld:algo} for various
  values of the exponent $m$. From top to bottom, $m$ takes increasing
  values in $\{ 0.6, 1, 2, 3\}$. Note that the case $m=0.6$ is outside
  of the scope of the $\Gamma$-convergence theorem \label{fig:nl}}
\end{figure}

\subsection{Crowd-motion with congestion}
As a second application, we consider the model of crowd motion with
congestion introduced by Maury, Roudneff-Chupin and Santambrogio
\cite{maury2010macroscopic}. The crowd is represented by a probability
density $\mu_0$ on a convex compact subset with nonempty interior $X$
of $\Rsp^2$, which is bounded by a certain constant, which we assume
normalized to one (so that we also naturally assume that
$\Haus^d(X)>1$).  One is also given a potential $V:X\to\Rsp$, which we
assume to be $\lambda$-convex, i.e.  $V(.) + \lambda\nr{.}^2$ is
convex. The evolution of the probability density describing the crowd
is induced by the gradient flow of the potential energy
\begin{equation}
\mE(\mu) = \int_{\Rsp^d} V(x) \dd \mu(x),
\end{equation}
in the Wasserstein space, under the additional constraint that the
density needs to remain bounded by one. We rely again on
time-discretization of this gradient flow using the
Jordan-Kinderlehrer-Otto scheme. This gives us the following
formulation:
\begin{equation}
  \mu_{k+1} = \arg\min_{\mu \in \Prob(X)} \frac{1}{2\tau} \Wass_2^2(\mu_k,\mu) + \mE(\mu) + \mU(\mu),
\end{equation}
where $\mU$ is the indicatrix function of the probability measures
whose density is bounded by one:
\begin{equation}
\mU(\mu) = \left\{\begin{aligned}
&0 \hbox{ if } \mu \ll \Haus^d \hbox{ and } \frac{\dd \mu}{\dd\Haus^d} \leq 1 \\
&+\infty \hbox{ if not.}
\end{aligned}
\right.
\end{equation}
In order to perform numerical simulations, we replace this indicatrix
function by a smooth approximation.
\begin{equation}
  \mU_\alpha(\mu) = \left\{\begin{aligned}
      &\int_{\Rsp^d} \rho^\alpha(x) (-\log(1-\rho(x)^{1/d}) \dd x \hbox{ if } \mu \ll \Haus^d \hbox{ and } \rho := \frac{\dd \mu}{\dd\Haus^d}, \\
      &+\infty \hbox{ if not.}
\end{aligned}
\right.
\label{eq:Ualpha}
\end{equation}
Note that if $\mU_\alpha(\mu)$ is finite, then the density of $\mu$ is
bounded by one almost everywhere. Moreover, we have the following
convexity and $\Gamma$-convergence results:
\begin{proposition}\begin{itemize}
\item[(i)] The energy $\mU_\alpha$ is convex under general displacement.
\item[(ii)] $\mU_\alpha$ $\Gamma$-converges (for the weak convergence of measures on $X$) to $\mU$ as $\alpha$ tends to $+\infty$ ; 
\item[(iii)] $\beta \mU_1$ $\Gamma$-converges to $\mU$ as $\beta$ tends
  to $0$.
\end{itemize}
\end{proposition}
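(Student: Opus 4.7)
The plan is to exploit the power-series representation of $-\log(1-x)$ for (i) and to build almost-explicit recovery sequences by convex combination with the uniform measure for (ii) and (iii).

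For (i), I would write
\[
U_\alpha(\rho) = -\rho^\alpha\log(1-\rho^{1/d}) = \sum_{k\geq 1}\frac{\rho^{\alpha+k/d}}{k} \qquad (\rho \in [0,1)),
\]
with both sides equal to $+\infty$ on $[1,+\infty)$. Each summand is a positive multiple of $\rho \mapsto \rho^{m_k}$ with $m_k := \alpha + k/d$; for such a pure power, $s \mapsto s^d\cdot s^{-d m_k} = s^{d(1-m_k)}$ has non-positive exponent as soon as $m_k \geq 1$, hence is convex and non-increasing on $(0,\infty)$, which is exactly McCann's condition \HU. Under the natural range $\alpha \geq 1-1/d$, every $m_k$ satisfies $m_k \geq 1$, so each $\mU_\alpha^{(k)}(\mu) := \tfrac{1}{k}\int\rho(x)^{m_k}\,\dd x$ is displacement convex. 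Along any Wasserstein geodesic $(\mu_t)$ the partial sums $t\mapsto\sum_{k\leq K}\mU_\alpha^{(k)}(\mu_t)$ are convex, and monotone convergence shows that the increasing limit $t\mapsto\mU_\alpha(\mu_t)$ is itself convex (possibly with the value $+\infty$). This yields the displacement convexity claimed in (i).

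For (ii), the target $\Gamma$-limit is the convex indicator of $\{\sigma\in\Prob(X):\sigma\leq\Haus^d\}$. The $\liminf$ inequality is immediate: if $\mu_\alpha\to\mu$ weakly and $\mU_\alpha(\mu_\alpha)$ is eventually finite, then each $\mu_\alpha$ admits a density $\rho_\alpha\leq 1$, and testing against nonnegative continuous functions shows that the limit density also satisfies $\rho\leq 1$, so $\mU(\mu) = 0$ and the inequality is trivial. For the recovery sequence, given $\mu$ with $\rho\leq 1$ I would take
\[
\rho_\alpha := (1-\eta_\alpha)\rho + \eta_\alpha/\Haus^d(X), \qquad \eta_\alpha := \alpha^{-1/2}.
\]
Since $\Haus^d(X) > 1$, setting $c := 1 - 1/\Haus^d(X) > 0$ gives $\rho_\alpha \leq 1 - c\eta_\alpha$; concavity of $t\mapsto t^{1/d}$ yields $1 - \rho_\alpha^{1/d} \geq c\eta_\alpha/d$, whence $-\log(1-\rho_\alpha^{1/d}) \leq \log(d/(c\eta_\alpha))$, and simultaneously $\rho_\alpha^\alpha \leq (1-c\eta_\alpha)^\alpha \leq e^{-c\sqrt\alpha}$. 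Multiplying and integrating over $X$ gives $\mU_\alpha(\rho_\alpha) \leq \Haus^d(X)\, e^{-c\sqrt\alpha}\bigl(\log(d/c) + \tfrac12\log\alpha\bigr) \to 0$, while $\rho_\alpha\to\rho$ strongly in $\LL^1(X)$ and hence weakly in $\Prob(X)$.

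For (iii), the $\liminf$ inequality is proved as in (ii), since finiteness of $\beta\mU_1(\mu_\beta)$ still forces $\rho_\beta \leq 1$. The $\limsup$ is handled by the same convex-combination construction with $\delta_\beta := \beta$ in place of $\eta_\alpha$, giving
\[
\beta\,\mU_1(\rho_\beta) \leq \beta\,\Haus^d(X)\bigl(\log(d/c) + \log(1/\beta)\bigr) \xrightarrow[\beta\to 0^+]{} 0.
\]
The only delicate point in the whole proposition is the series argument for (i): one has to verify that the admissible $\alpha$ range makes every piece individually displacement convex, and that monotone countable sums preserve displacement convexity along Wasserstein geodesics (which follows from the stability of convexity under non-decreasing pointwise limits). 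Parts (ii) and (iii) reduce to one-line elementary estimates once the right perturbation of the uniform density is chosen.
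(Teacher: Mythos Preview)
Your argument is correct, and for parts (ii) and (iii) it is essentially the paper's own proof: the recovery sequence is a convex combination of $\mu$ with a uniform measure whose density lies strictly below $1$, the perturbation parameter is chosen so that the resulting logarithmic blow-up is beaten by the prefactor, and the $\Gamma$-liminf is immediate from the fact that finiteness of $\mU_\alpha$ (or $\beta\mU_1$) forces the density constraint $\rho\leq 1$. Your choice $\delta_\beta=\beta$ in (iii) is simpler than the paper's $\eps_\beta\sim e^{-\beta^{-1/2}}$ and works just as well, since $\beta\log(1/\beta)\to 0$.

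For (i) your route differs from the paper's. The paper simply asserts that McCann's condition \HU holds for $U_\alpha$ after a ``simple computation'' on $r\mapsto r^d U_\alpha(r^{-d})$. You instead expand $-\log(1-\rho^{1/d})$ as a power series, reducing $U_\alpha$ to a countable increasing sum of pure powers $\rho^{\alpha+k/d}$, each of which satisfies \HU whenever $\alpha+k/d\geq 1$, i.e.\ whenever $\alpha\geq 1-1/d$. This is a genuinely different decomposition: it replaces a somewhat opaque direct check by the observation that \HU is stable under nonnegative sums and monotone limits. The payoff is twofold: you avoid differentiating a product involving $\log(1-r^{-1})$, and you obtain as a by-product the sharp threshold $\alpha\geq 1-1/d$ (the paper's statement uses only $\alpha=1$ and $\alpha\to+\infty$). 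One small remark: the proposition asks for convexity under \emph{generalized} displacement, not merely along Wasserstein geodesics; since each summand satisfies \HU, McCann's theorem already gives the generalized version for each piece, and your monotone-limit argument carries this through unchanged.
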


\begin{proof} The proof of (i) uses McCann's theorem: one only needs
  $r^d U(r^{-d})$ to be convex non-increasing and $U(0) = 0$, which
  follows from a simple computation. (ii) The proof of the $\Gamma$-liminf inequality is obvious since $\mU_\alpha \ge \mU$ and $\mU$ is lower semicontinuous. As for the $\Gamma$-limsup inequality, we proceed as follows: we first fix $\mu\in  \Prob(X)$ such that $\mU(\mu)=0$ (otherwise, there is nothing to prove). Let us then fix a set $A\subset X$ such that $\Haus^d(A)>1$ and let $m$ be the uniform probability measure on $A$. For $\eps\in (0,1)$, let us then define $\mu_\eps:=(1-\eps) \mu +\eps m$ so that $\mu_\eps$ has a density bounded by $1-C \eps$ where $C:=1-\frac{1}{\Haus^d(A)}>0$. Letting $\alpha\to \infty$ and setting $\eps_\alpha \sim \alpha^{-1/2}$, one directly checks that $\limsup_{\alpha} \mU_\alpha(\mu_{\eps_\alpha})=O(e^{-\alpha^{1/2}} \log (\alpha)) =0 =\mU(\mu)$ which proves the $\Gamma$-limsup inequality. For (iii), the proof is similar, choosing $\eps_\beta \sim e^{-\beta^{-1/2}}$ as $\beta \to 0$ for the $\Gamma$-limsup inequality.
\end{proof}

\subsubsection*{Numerical result} Figure~\ref{fig:cm} displays a
numerical application, where we compute the Wasserstein gradient flow
of a probability density whose energy is given by
\begin{align}
  &\mF(\rho) = \int_{X} V(x) \rho(x) \dd x + \alpha \mU_1(\rho),\\
&\hbox{where } X=[-2,2]^2,~ \hbox{ and } V(x) = \nr{x - (2,0)}^2 + 5\exp(-5\nr{x}^2/2). \notag
\end{align}
Note that the chosen potential is semi-convex. We track the evolution
of a probability density on a fixed grid, which allows us to use a
simple finite difference scheme to evaluate the gradient of the
transport potential. From one timestep to another, the mass of the
absolutely continuous pushforward of the minimizer is redistributed on
the fixed grid.

\begin{figure}
\centering
\includegraphics[width=.2\textwidth]{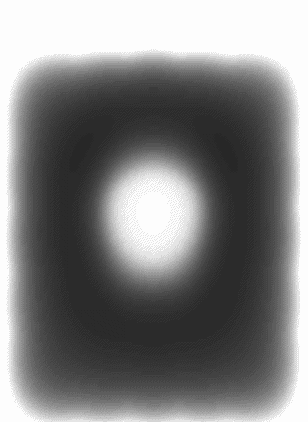}\hspace{-.2cm}
\includegraphics[width=.2\textwidth]{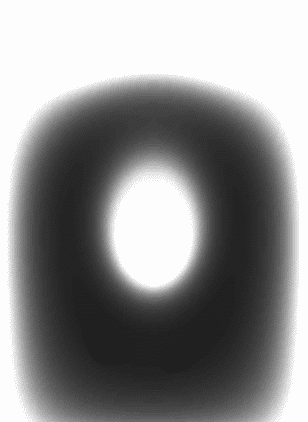}\hspace{-.2cm}
\includegraphics[width=.2\textwidth]{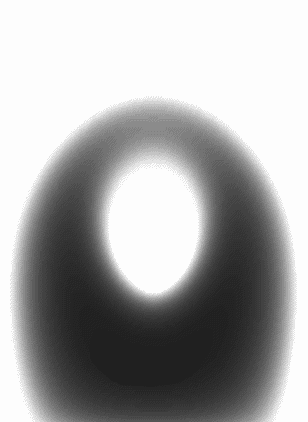}\hspace{-.2cm}
\includegraphics[width=.2\textwidth]{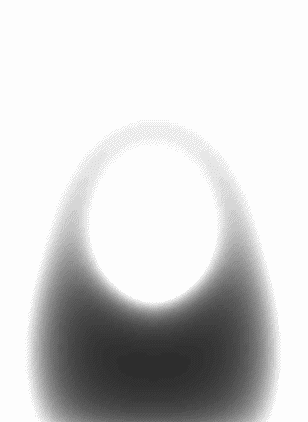}\hspace{-.2cm}
\includegraphics[width=.2\textwidth]{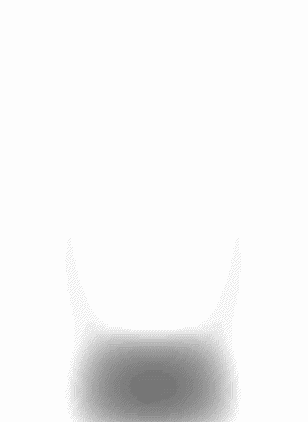}\\
\includegraphics[width=.2\textwidth]{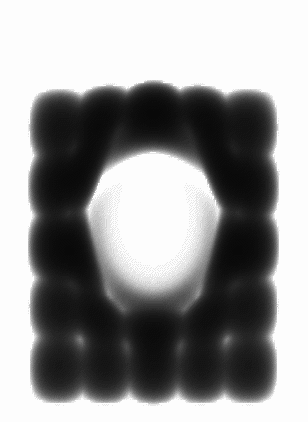}\hspace{-.2cm}
\includegraphics[width=.2\textwidth]{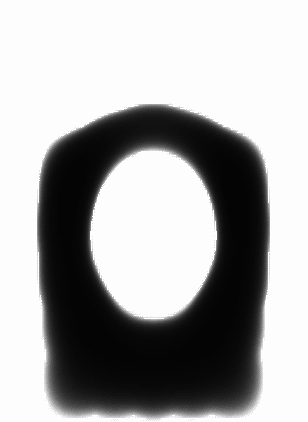}\hspace{-.2cm}
\includegraphics[width=.2\textwidth]{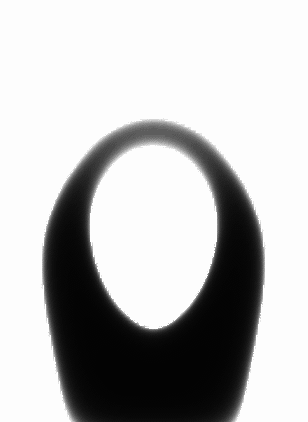}\hspace{-.2cm}
\includegraphics[width=.2\textwidth]{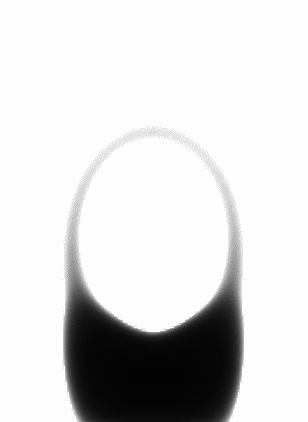}\hspace{-.2cm}
\includegraphics[width=.2\textwidth]{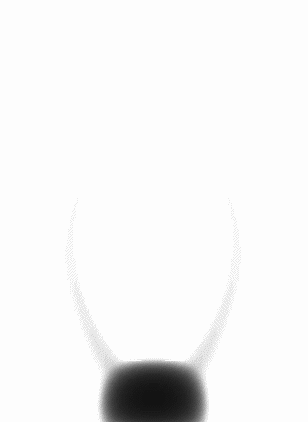}\\
\includegraphics[width=.2\textwidth]{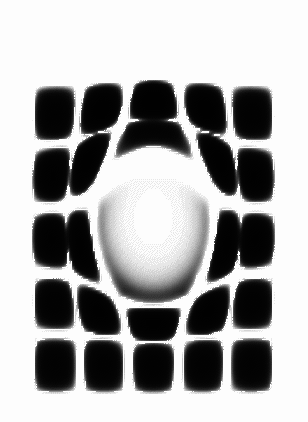}\hspace{-.2cm}
\includegraphics[width=.2\textwidth]{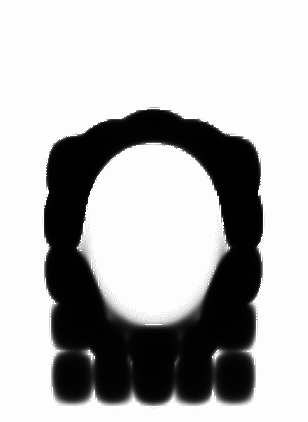}\hspace{-.2cm}
\includegraphics[width=.2\textwidth]{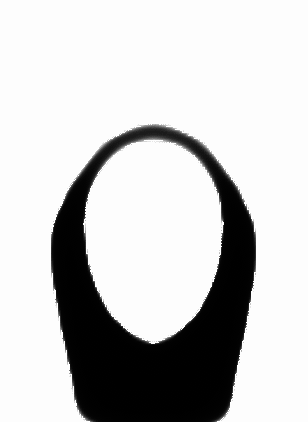}\hspace{-.2cm}
\includegraphics[width=.2\textwidth]{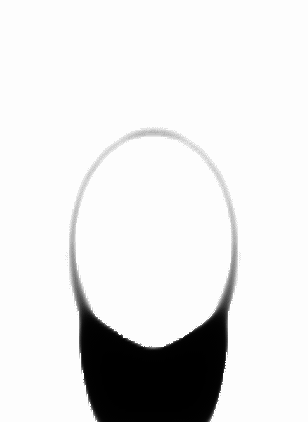}\hspace{-.2cm}
\includegraphics[width=.2\textwidth]{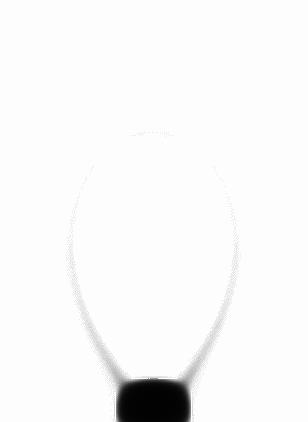}
\caption{Simulation of crowd motion under congestion using the
  gradient flow formulation. The congestion term is given by $\beta
  \mU_1$ (see Equation \eqref{eq:Ualpha}), for various values of
  $\beta$. From top to bottom, $\beta$ is set to $10^{-k}$ for $0\leq
  k\leq 2$. \label{fig:cm}}
\end{figure}

\subsection*{Acknowledgements.} The authors gratefully acknowledge the
support of the French ANR, through the projects ISOTACE
(ANR-12-MONU-0013), OPTIFORM (ANR-12-BS01-0007) and TOMMI
(ANR-11-BSO1-014-01).

\bibliographystyle{amsplain}
\bibliography{jko}

\providecommand{\bysame}{\leavevmode\hbox to3em{\hrulefill}\thinspace}
\providecommand{\MR}{\relax\ifhmode\unskip\space\fi MR }
\providecommand{\MRhref}[2]{%
  \href{http://www.ams.org/mathscinet-getitem?mr=#1}{#2}
}
\providecommand{\href}[2]{#2}
\begin{thebibliography}{10}

\bibitem{cgal}
\emph{\textsc{Cgal}, {C}omputational {G}eometry {A}lgorithms {L}ibrary},
  http://www.cgal.org.

\bibitem{agueh2002existence}
Martial Agueh, \emph{Existence of solutions to degenerate parabolic equations
  via the monge-kantorovich theory.}, Ph.D. thesis, Georgia Institute of
  Technology, USA, 2002.

\bibitem{agueh2005existence}
\bysame, \emph{Existence of solutions to degenerate parabolic equations via the
  monge-kantorovich theory}, Advances in Differential Equations \textbf{10}
  (2005), no.~3, 309--360.

\bibitem{agueh2013one}
Martial Agueh and Malcolm Bowles, \emph{One-dimensional numerical algorithms
  for gradient flows in the p-wasserstein spaces}, Acta applicandae
  mathematicae \textbf{125} (2013), no.~1, 121--134.

\bibitem{ambrosio2005gradient}
Luigi Ambrosio, Nicola Gigli, and Giuseppe Savar{\'e}, \emph{Gradient flows: in
  metric spaces and in the space of probability measures}, Lectures in
  Mathematics ETH Z{\"u}rich (2005).

\bibitem{blanchet2008convergence}
Adrien Blanchet, Vincent Calvez, and Jos{\'e}~A Carrillo, \emph{Convergence of
  the mass-transport steepest descent scheme for the subcritical
  patlak-keller-segel model}, SIAM Journal on Numerical Analysis \textbf{46}
  (2008), no.~2, 691--721.

\bibitem{blanchet2012optimal}
Adrien Blanchet and Guillaume Carlier, \emph{Optimal transport and cournot-nash
  equilibria}, arXiv preprint arXiv:1206.6571 (2012).

\bibitem{brenier1991polar}
Yann Brenier, \emph{Polar factorization and monotone rearrangement of
  vector-valued functions}, Communications on pure and applied mathematics
  \textbf{44} (1991), no.~4, 375--417.

\bibitem{burger2010mixed}
Martin Burger, Jose~A Carrillo, Marie-Therese Wolfram, et~al., \emph{A mixed
  finite element method for nonlinear diffusion equations}, Kinetic and Related
  Models \textbf{3} (2010), no.~1, 59--83.

\bibitem{caffarelli1992boundary}
Luis~A Caffarelli, \emph{Boundary regularity of maps with convex potentials},
  Communications on pure and applied mathematics \textbf{45} (1992), no.~9,
  1141--1151.

\bibitem{carlier2001numerical}
Guillaume Carlier, Thomas Lachand-Robert, and Bertrand Maury, \emph{A numerical
  approach to variational problems subject to convexity constraint}, Numerische
  Mathematik \textbf{88} (2001), no.~2, 299--318.

\bibitem{carrillo2009numerical}
Jos{\'e}~A Carrillo and J~Salvador Moll, \emph{Numerical simulation of
  diffusive and aggregation phenomena in nonlinear continuity equations by
  evolving diffeomorphisms}, SIAM Journal on Scientific Computing \textbf{31}
  (2009), no.~6, 4305--4329.

\bibitem{chone2001non}
Philippe Chon{\'e} and Herv{\'e}~VJ Le~Meur, \emph{Non-convergence result for
  conformal approximation of variational problems subject to a convexity
  constraint}, Numer. Funct. Anal. Optim. \textbf{5-6} (2001), no.~22,
  529--547.

\bibitem{ekeland2010algorithm}
Ivar Ekeland and Santiago Moreno-Bromberg, \emph{An algorithm for computing
  solutions of variational problems with global convexity constraints},
  Numerische Mathematik \textbf{115} (2010), no.~1, 45--69.

\bibitem{gutierrez2001monge}
Cristian~E Guti{\'e}rrez, \emph{The {M}onge-{A}mp{\`e}re equation}, vol.~44,
  Birkhauser, 2001.

\bibitem{jordan1998variational}
Richard Jordan, David Kinderlehrer, and Felix Otto, \emph{The variational
  formulation of the fokker--planck equation}, SIAM journal on mathematical
  analysis \textbf{29} (1998), no.~1, 1--17.

\bibitem{kinderlehrer1999approximation}
David Kinderlehrer and Noel~J Walkington, \emph{Approximation of parabolic
  equations using the wasserstein metric}, ESAIM: Mathematical Modelling and
  Numerical Analysis \textbf{33} (1999), no.~04, 837--852.

\bibitem{lachand2005minimizing}
Thomas Lachand-Robert and {\'E}douard Oudet, \emph{Minimizing within convex
  bodies using a convex hull method}, SIAM Journal on Optimization \textbf{16}
  (2005), no.~2, 368--379.

\bibitem{maury2010macroscopic}
Bertrand Maury, Aude Roudneff-Chupin, and Filippo Santambrogio, \emph{A
  macroscopic crowd motion model of gradient flow type}, Mathematical Models
  and Methods in Applied Sciences \textbf{20} (2010), no.~10, 1787--1821.

\bibitem{mccann1997convexity}
Robert~J McCann, \emph{A convexity principle for interacting gases}, Advances
  in {M}athematics \textbf{128} (1997), no.~1, 153--179.

\bibitem{merigot2014handling}
Quentin M{\'e}rigot and Edouard Oudet, \emph{Handling convexity-like
  constraints in variational problems}, arXiv preprint arXiv:1403.2340 (2014).

\bibitem{mirebeau2013}
Jean-Marie Mirebeau, \emph{Adaptive, anisotropic and hierarchical cones of
  discrete convex functions}, arXiv preprint arXiv:1402.1561 (2014).

\bibitem{oberman2008wide}
Adam~M Oberman, \emph{Wide stencil finite difference schemes for the elliptic
  monge-ampere equation and functions of the eigenvalues of the hessian},
  Discrete Contin. Dyn. Syst. Ser. B \textbf{10} (2008), no.~1, 221--238.

\bibitem{oberman2013numerical}
\bysame, \emph{A numerical method for variational problems with convexity
  constraints}, SIAM Journal on Scientific Computing \textbf{35} (2013), no.~1,
  A378--A396.

\bibitem{otto2001geometry}
Felix Otto, \emph{The geometry of dissipative evolution equations: the porous
  medium equation}, Communications in partial differential equations
  \textbf{26} (2001), no.~1-2, 101--174.

\bibitem{schneider1993convex}
Rolf Schneider, \emph{Convex bodies: the brunn-minkowski theory}, vol.~44,
  Cambridge University Press, 1993.

\bibitem{trudinger2005affine}
Neil~S. Trudinger and Xu-Jia Wang, \emph{The affine {P}lateau problem}, J.
  Amer. Math. Soc. \textbf{18} (2005), no.~2, 253--289.

\bibitem{zhou2012first}
Bin Zhou, \emph{The first boundary value problem for {A}breu's equation}, Int.
  Math. Res. Not. IMRN (2012), no.~7, 1439--1484.

\end{thebibliography}

\end{document}